\theoremstyle{plain}
\newtheorem{thm}{Theorem}[section]
\newtheorem{lem}[thm]{Lemma}
\newtheorem{prop}[thm]{Proposition}
\newtheorem{cor}[thm]{Corollary}
\newtheorem{dfn-lem}[thm]{Definition-Lemma}
\newtheorem{ex-thm}[thm]{Example}
\newtheorem*{thm*}{Theorem}
\theoremstyle{definition}
\newtheorem{dfn}[thm]{Definition}
\newtheorem{ex}[thm]{Example}
\newtheorem{rmk}[thm]{Remark}
\theoremstyle{remark}
\newcommand{\Z}{\mathbb{Z}}
\newcommand{\Q}{\mathbb{Q}}
\newcommand{\R}{\mathbb{R}}
\newcommand{\C}{\mathbb{C}}
\newcommand{\bmk}{\bm{k}}
\newcommand{\mfH}{\mathfrak{H}}
\newcommand{\mfa}{\mathfrak{a}}
\newcommand{\re}{\operatorname{Re}}
\newcommand{\isomto}{\stackrel{\sim}{\longrightarrow}}
\newcommand{\bs}{\backslash}
\newcommand{\ra}{\rightarrow}
\newcommand{\tp}[1]{{}^t\!#1} %matrix transpose
\newcommand{\setm}{\!-\!}
\numberwithin{equation}{section}
\newcommand{\PPC}{\mathbb{P}^{{n}-1}(\mathbb{C})}
\newcommand{\Rpos}{\mathbb{R}_{>0}}
\newcommand{\brk}[1]{\langle#1\rangle}%bracket
\begin{document}
\title{On the conical zeta values and the Dedekind zeta values for totally real fields}
\author{Hohto Bekki}
\address{Department of Mathematics, Faculty of Science and Technology, Keio University, 3-14-1 Hiyoshi, Kohoku-ku, Yokohama, Kanagawa, 223-8522, Japan}
\curraddr{}
\email{bekki@math.keio.ac.jp}
\thanks{}
%\subjclass[2020]{}
\keywords{}
%\date{\today}
\dedicatory{}

\begin{abstract}
The conical zeta values are a generalization of the multiple zeta values which are defined by certain multiple sums over convex cones. 
In this paper, we present a relation between the values of the Dedekind zeta functions for totally real fields and the conical zeta values for certain algebraic cones. 
More precisely, we show that the values of the partial zeta functions for totally real fields can be expressed as a rational linear combination of the conical zeta values associated with certain algebraic cones up to the square root of the discriminant. 
\end{abstract}

\maketitle

%\tableofcontents

\section{Introduction}

\subsection{The conical zeta values}
Let ${n} \geq 1$ be an integer. 
For a subset $C \subset \R_{>0}^{n}$ and a multi-index $\bm k=(k_1, \dots, k_{n}) \in \Z_{\geq 1}^{n}$, set
\begin{align*}
\zeta_C(\bm k) := \sum_{x \in C\cap \Z^{n}} \frac{1}{x^{\bm k}}
\end{align*}
whenever the sum is convergent, where $x^{\bm k}:= x_1^{k_1}\cdots x_{n}^{k_{n}}$ for  $x=\tp(x_1, \dots, x_{n}) \in C \cap \Z^{n}$. 
In the case where $C \subset \R_{>0}^{n}$ is a cone (cf.~Definition \ref{dfn cone}), the value $\zeta_C(\bm k)$ is often called the \textit{conical zeta value} associated with the cone $C$, cf.~\cite{gpz}, \cite{terasoma}.

The following are the basic examples of the conical zeta values and are the motivation for the definition. 

\begin{ex}
Let $e_1, \dots, e_{n}$ denote the standard basis of $\R^{n}$, i.e., $e_i=\tp(0, \dots, 0, \overset{i}{1}, 0, \dots, 0)$. 
\begin{enumerate}
\item
If $C=\sum_{i=1}^{n} \Rpos e_i = \Rpos^{n}$, then 
\[
\zeta_C(\bm k) = \sum_{x \in \Z_{>0}^{n}}\frac{1}{x^{\bm k}}=\zeta(k_1)\cdots \zeta(k_{n})
\]
is the product of Riemann zeta functions. 
\item 
If $C =\sum_{i=1}^{n}\Rpos \sum_{j=i}^{n} e_j =\Rpos (e_1+\cdots +e_{n}) +\Rpos (e_2+\cdots +e_{n})  +\dots  +\Rpos e_{n}$, then 
\[
\zeta_C(\bmk) = \sum_{x_1<x_2< \cdots <x_{n}} \frac{1}{x^{\bm k}} =\zeta (k_1, \dots, k_{n})
\]
is the multiple zeta value. 
\end{enumerate}
\end{ex}

More generally, for any rational cone $C \subset \Rpos^{n}$, i.e., $C$ is of the form
\begin{align*}
C=\sum_{i=1}^r \Rpos \alpha_i  
\end{align*}
with some rational vectors $\alpha_1, \dots, \alpha_r \in \Q^{n}$, the following is known about the values $\zeta_C(\bm k)$.

\begin{thm}[Terasoma~\cite{terasoma}]
For a rational cone $C \subset \Rpos^{n}$, the conical zeta value $\zeta_C(\bm k)$ can be written as a $\Q^{ab}$-linear combination of the cyclotomic multiple zeta values. 
\end{thm}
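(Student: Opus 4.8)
The plan is to reduce to a simplicial cone and then to peel off the summation variables one at a time, at each stage converting a one-variable sum with a rational shift into a $\Q^{ab}$-linear combination of cyclotomic polylogarithm values.

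First I would exploit the additivity of $\zeta_C(\bm k)$ under decompositions of the cone. Any rational cone $C=\sum_{i=1}^r\Rpos\alpha_i$ admits a triangulation into simplicial rational subcones, and by inclusion-exclusion over the (lower-dimensional) common faces --- or, equivalently, by passing to half-open cones --- one expresses $\zeta_C(\bm k)$ as a $\Z$-linear combination of conical zeta values of simplicial cones. Since a face of a simplicial cone is again a simplicial cone of smaller dimension, an induction on dimension reduces the theorem to the case of a simplicial cone, i.e.\ one whose primitive integral ray generators $\beta_1,\dots,\beta_r\in\Z^{{n}}$ are linearly independent.

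For such a cone I would parametrize the lattice points by a fundamental parallelepiped. Let $\Pi$ be the finite set of lattice points in the half-open parallelepiped $\{\sum_i t_i\beta_i : 0\le t_i<1\}$; then every $x\in C\cap\Z^{{n}}$ is written uniquely as $x=v+\sum_{i=1}^r m_i\beta_i$ with $v\in\Pi$ and $m_i\in\Z_{\ge 0}$, so that
\begin{equation*}
\zeta_C(\bm k)=\sum_{v\in\Pi}\ \sum_{\bm m\in\Z_{\ge 0}^r}\ \prod_{j=1}^{{n}}\Bigl(v_j+\sum_{i=1}^r m_i\beta_{i,j}\Bigr)^{-k_j}.
\end{equation*}
Because $C\subset\Rpos^{{n}}$, each $\beta_{i,j}$ is a positive integer and each $v_j$ a non-negative rational, so every linear form $L_j(\bm m)=v_j+\sum_i m_i\beta_{i,j}$ is positive on the summation range; the problem is now to sum a product of powers of positive linear forms over an orthant.

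The heart of the matter, and the step I expect to be the main obstacle, is to turn each inner sum into a nested sum of cyclotomic type. I would argue by induction on the depth $r$: after a further subdivision of the cone --- equivalently, after ordering the variables and repeatedly applying the partial-fraction identity in a single variable --- one brings the family of linear forms $\{L_j\}$ into a triangular shape in which $m_r$ can be summed first, then $m_{r-1}$, and so on, exactly as in the iterated sum defining a multiple zeta value. The one-variable building block is $\sum_{m\ge 0}(m+a/N)^{-k}$, a Hurwitz-type value, and the congruence bookkeeping left over from the shifts $v$ is resolved by the character identity $\mathbbm{1}[m\equiv a\ (N)]=\tfrac1N\sum_{j=0}^{N-1}\zeta_N^{\,j(m-a)}$; this trades the rational shift for a combination of Lerch values $\sum_{m>0}\zeta_N^{\,jm}m^{-k}$ with coefficients in $\Q(\zeta_N)\subset\Q^{ab}$. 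Iterating over the $r$ levels then expresses the inner sum, and hence $\zeta_C(\bm k)$, as a $\Q^{ab}$-linear combination of cyclotomic multiple zeta values. The genuine difficulty is combinatorial rather than analytic: one must control the partial-fraction/subdivision process so that it terminates, so that all intermediate denominators remain positive linear forms (preserving convergence), and so that the roots of unity produced at the successive levels assemble into honest cyclotomic multiple zeta values rather than more general multivariable Lerch sums.
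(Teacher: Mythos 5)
First, a point of reference: the paper does not prove this statement at all --- it is quoted as a known theorem with a citation to Terasoma \cite{terasoma} --- so there is no internal proof to compare against, and your proposal must stand on its own. Its outer reductions are correct and standard: triangulating into rational simplicial cones with half-open/inclusion-exclusion bookkeeping, parametrizing lattice points by a fundamental parallelepiped, and trading rational shifts for roots of unity via the character identity are all sound moves (modulo two slips: since the generators of the subcones lie only in the closed orthant, the $\beta_{i,j}$ are nonnegative rather than positive integers; and for the \emph{open} cone you need the half-open parallelepiped with $0<t_i\le 1$ rather than $0\le t_i<1$ --- with your convention, e.g.\ for $C=\Rpos e_1+\Rpos e_2$ one gets $v=0\in\Pi$ and the term $\bm m=0$ makes every $L_j$ vanish, so the claimed positivity of the $L_j$ on the summation range fails as stated).

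The genuine gap is exactly the step you flag as ``the main obstacle'': converting $\sum_{\bm m\in\Z_{\ge0}^r}\prod_{j}L_j(\bm m)^{-k_j}$, a sum of products of powers of nonnegative-integer linear forms over an orthant, into nested sums of cyclotomic multiple zeta shape. This is not a known black box you can invoke; it is essentially the content of the theorem itself. ``Repeatedly applying the partial-fraction identity'' and ``subdividing to order the variables'' name plausible tools but no algorithm: one-variable partial fractions produce new linear forms (possibly with larger coefficients), coefficients greater than $1$ in the forms reintroduce congruence conditions --- hence fresh roots of unity --- after each geometric summation, and you give no termination argument nor any reason the process stays within convergent positive forms and lands on honest cyclotomic multiple zeta values rather than general multivariable Lerch-type sums. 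As written, the proposal reduces Terasoma's theorem to its hard core and then asserts the core. For comparison, Terasoma's actual route sidesteps this series combinatorics: roughly, one uses the rational generating function $\sum_{x\in C\cap\Z^{n}}t^x=\sum_{v\in\Pi}t^v/\prod_i(1-t^{\beta_i})$ of the simplicial cone to write $\zeta_C(\bm k)$ as an integral of a rational differential form, which after monomial changes of variables becomes an iterated integral with singularities at $0$, $\infty$, and $N$-th roots of unity; cyclotomic multiple zeta values with $\Q^{ab}$-coefficients then appear as the values of such iterated integrals, with convergence and bookkeeping handled by the integral representation rather than by a partial-fraction recursion.
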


On the other hand, to the best of the author's knowledge, it seems that little is known about the arithmetic properties of the conical zeta values associated with non-rational cones. 
In this paper, we consider the conical zeta values for certain algebraic cones which are not necessarily rational, and show that certain $\Q$-linear combinations of such conical zeta values describe the values of the partial zeta functions of totally real fields (up to the square root of the discriminant). 
Here, we say that a cone $C \subset \Rpos^{n}$ is algebraic if it is of the form
\begin{align*}
C=\sum_{i=1}^r \Rpos \alpha_i  
\end{align*}
with some algebraic vectors $\alpha_1, \dots, \alpha_r \in (\R \cap \overline{\Q})^{n}$.

\subsection{Main result}
More precisely, the following is the main theorem of this paper. 

\begin{thm}[cf.~Theorem \ref{thm main}]
Let $F$ be a totally real number field of degree ${n} \geq 1$, and let $\mfa \subset F$ be a fractional ideal of $F$. Then there exist a finite number of algebraic cones $C_1, \dots, C_m \subset \Rpos^{n}$ such that for any $k \in \Z_{\geq 2}$, we have
\begin{align*}
\zeta_{F,+}(\mfa^{-1}, k) \in \frac{1}{\sqrt{d_F}} \sum_{i=1}^m \sum_{\substack{\bm k \in \Z_{\geq 1}^{n}\\ |\bm k|={n}k}} \Q \zeta_{C_i}(\bm k), 
\end{align*}
where $\zeta_{F,+}(\mfa^{-1},s)$ is the (narrow) partial zeta function associated with $\mfa^{-1}$, $d_F$ is the discriminant of $F$, and $|\bm k|:=k_1+\cdots +k_{n}$ for $\bm k =(k_1, \dots, k_{n})$. 
\end{thm}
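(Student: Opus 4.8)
The plan is to follow the classical strategy: unfold the partial zeta function into a sum over a fundamental domain for the totally positive units, decompose that domain into simplicial cones in the style of Shintani, and then match the resulting sums to conical zeta values. First I would unfold the narrow partial zeta function. Writing an integral ideal in the narrow class of $\mfa^{-1}$ as $(\xi)\mfa^{-1}$ with $\xi \in \mfa$ totally positive and unique up to a totally positive unit, one obtains
\begin{equation*}
\zeta_{F,+}(\mfa^{-1}, k) = \N(\mfa)^{k}\sum_{\xi \in (\mfa \cap F_{\gg 0})/\mcO_{F,+}^{\times}}\N(\xi)^{-k},
\end{equation*}
where $\mcO_{F,+}^{\times}$ is the group of totally positive units and $\N(\mfa)^{k} \in \Q$. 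Since the prefactor is a rational power of $\N(\mfa)$, it suffices to treat the sum over $\xi$.

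Second, I would invoke Shintani's unit theorem to realize a fundamental domain for $\mcO_{F,+}^{\times}$ acting on $F_{\gg 0} \hookrightarrow \Rpos^{n}$, via the $n$ real embeddings $\sigma_1, \dots, \sigma_{n}$, as a finite disjoint union of half-open simplicial cones $C_l = \sum_{i}\Rpos v_i^{(l)}$ with totally positive generators $v_i^{(l)} \in F$. This reduces the problem to evaluating $\sum_{\xi \in \mfa \cap C_l}\N(\xi)^{-k}$ for each $l$. Passing to the embedding coordinates $x = (\sigma_j(\xi))_j$ turns this into $\sum_{x \in \sigma(\mfa) \cap \sigma(C_l)}(x_1\cdots x_{n})^{-k}$: the norm becomes exactly the product of the coordinates, $\sigma(C_l)$ is an algebraic simplicial cone in $\Rpos^{n}$, and $\sigma(\mfa)$ is a full lattice of covolume $\N(\mfa)\sqrt{d_F}$. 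Equivalently, in the integral coordinates $m \in \Z^{n}$ of $\xi$ with respect to a $\Z$-basis $\omega_1, \dots, \omega_{n}$ of $\mfa$, the summand is $\mathcal{N}(m)^{-k}$ for the rational norm form $\mathcal{N}$, which factors as the product of the conjugate linear forms $\ell_j(m) = \sigma_j(\xi) = \sum_i \sigma_j(\omega_i)m_i$.

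The heart of the matter, and what I expect to be the main obstacle, is the final step: rewriting $\sum (x_1\cdots x_{n})^{-k}$ — a sum of the symmetric norm-type monomial over an algebraic cone in the non-standard lattice $\sigma(\mfa)$ — as a $\Q$-linear combination of genuine conical zeta values $\zeta_{C_i}(\bmk)$, which by definition involve the standard lattice $\Z^{n}$ and the general monomial $x^{\bmk}$. I would attack this by subdividing the cone along the algebraic hyperplanes $\{x_i = x_j\}$ (equivalently $\{\ell_i = \ell_j\}$) into simplicial pieces on which the coordinates are ordered, and then performing iterated partial-fraction decompositions to break the product $\prod_j \ell_j^{-k}$ down to coordinate monomials. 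Two features of this procedure match the statement exactly. Partial fractions preserve the total degree $n k$ of the denominator, which forces $|\bmk| = n k$; and the whole configuration is stable under the Galois action permuting the embeddings $\sigma_j$, so the a priori algebraic coefficients occur in symmetric combinations that descend to $\Q$, save for the single Galois-anti-invariant determinant $\det(\sigma_j(\omega_i)) = \pm\,\N(\mfa)\sqrt{d_F}$, which is precisely the source of the factor $1/\sqrt{d_F}$.

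The technical points I would then need to control are: that the decomposition terminates in a \emph{finite} $\Q$-combination of honest conical zeta values, i.e.\ an exact rather than asymptotic identity, for which the absolute convergence guaranteed by $k \geq 2$ justifies all rearrangements; the half-open-cone bookkeeping both in Shintani's decomposition and in each subsequent subdivision; and the passage from the sublattice generated by the cone generators to $\mfa$ itself, which I would handle by summing over the finitely many cosets. Checking that these boundary and coset contributions recombine into the asserted $\Q$-coefficients, and that no cyclotomic denominators survive, is where the real work lies.
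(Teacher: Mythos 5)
Your first two steps (unfolding the partial zeta function, and applying a Shintani decomposition to the totally positive cone in the $x$-variable) are standard and correct, but the step you yourself flag as ``the heart of the matter'' contains a genuine gap, and it is not a bookkeeping issue that absolute convergence can repair. After your reductions the summand is, in embedding coordinates, the coordinate monomial $(x_1\cdots x_n)^{-k}$ but the lattice is the algebraic lattice $\sigma(\mfa)$; in integral coordinates the lattice is $\Z^n$ but the summand is $\prod_j \ell_j(m)^{-k}$ with $\ell_j$ the conjugate algebraic linear forms. A conical zeta value requires \emph{both} the standard lattice $\Z^n$ \emph{and} coordinate monomials $x^{-\bm k}$, and your proposed fix --- subdividing along $\{x_i=x_j\}$ and iterated partial fractions --- cannot bridge this mismatch. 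Any finite partial-fraction identity of the form $\prod_j \ell_j(m)^{-k}=\sum_{\bm a}c_{\bm a}\,m^{-\bm a}$ valid on an open cone would be an identity of rational functions, and the left-hand side has poles along the irrational hyperplanes $\ell_j=0$, which no finite combination of coordinate monomials (poles only along rational hyperplanes) can reproduce; this obstruction survives any unimodular or rational change of coordinates. The subdivision along $\{x_i=x_j\}$ is orthogonal to the problem: in the $x$-picture the summand is already a monomial and the obstruction is the lattice, not the ordering of coordinates. A pointwise decomposition into coordinate monomials exists only as an \emph{infinite} expansion, and resumming it into finitely many terms requires the unit group --- which your plan has already spent on the Shintani decomposition of the domain of summation. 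So ``checking that the decomposition terminates in a finite $\Q$-combination'' is not a technical point to be controlled at the end; it is exactly where the argument breaks.

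The paper circumvents this by working on the dual side, inside an integral representation. The Feynman parametrization (Proposition \ref{prop feynman}) writes $N_w(x)^{-k}$, for each lattice point $x\in\Z^n$, as an integral over the simplex $\Delta^{\circ}_{w,+}$ of $\brk{x,y}^{-nk}$ against the \emph{rational} polynomial $N_{w^*}(y)^{k-1}$; summing over $x\in T_{w,+}\cap\Z^n$ produces the partial Eisenstein series $\psi_{n(k-1),T_{w,+}}$, and the unit action is unfolded inside the integral (Proposition \ref{prop int rep}). The Shintani decomposition (Proposition \ref{prop shintani}) is then applied not to the $x$-space but to the rational cone $C_{(w^{(1)},\dots,w^{(n)})}$ in the integration variable $y$, refined into \emph{smooth} cones $I\in SL_n(\Z)$; changing variables $y\mapsto Iy$ to the standard simplex, expanding $N_{w^*}(Iy)^{k-1}=\sum_{\bm k}c_{I,\bm k}y^{\bm k}$ with $c_{I,\bm k}\in\Q$, and applying the Feynman parametrization a second time with the standard frame $(e_1,\dots,e_n)$ converts each term into $\zeta_{\tp I T_{w,+}}(\bm k+\mathbf{1})$: a sum over the standard lattice $\Z^n$, over the algebraic cone $\tp I T_{w,+}$, with manifestly rational coefficients $\bm k!\,c_{I,\bm k}/(nk-1)!$, the weight count $n(k-1)+n=nk$ coming from the degree of $N_{w^*}^{k-1}$ plus the shift $\mathbf{1}$. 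The only irrationality entering is $\det(w^{(1)},\dots,w^{(n)})=\sqrt{d_F}\,N\mfa$ from the first Feynman integration --- so your determinant heuristic for the source of $\sqrt{d_F}$ is the one part of your final step that matches the actual mechanism. In effect, the simplex integral \emph{is} the partial fraction you were looking for, but it is analytic rather than a finite algebraic identity: per lattice point it corresponds to an infinite decomposition indexed by $\Gamma_{w,+}$, which becomes finite only after the unit-invariant summation, and this is precisely why the cone decomposition must be performed in the dual variable.
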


Actually, the algebraic cones $C_1, \dots, C_m \subset \Rpos^{n}$ and the coefficients of each $\zeta_{C_i}(\bm k)$ can be computed by using the so-called Shintani's cone decomposition. 

\begin{ex}[cf.~Section \ref{sec ex sqrt5}]\label{ex sqrt5}
Let $F=\Q(\sqrt{5})$, and let $\mfa =\mathcal O_F=\Z[\frac{1+\sqrt{5}}{2}]$. % be the ring of integers of $F$. 
Moreover, let 
\begin{align*}
C&= 
%%\Rpos \left(1, \frac{3+\sqrt{5}}{2} \right) + \Rpos  \left(1, \frac{3-\sqrt{5}}{2}\right) \\
\Rpos 
\begin{pmatrix}
%%1 \\
\frac{3+\sqrt{5}}{2} \\
1 \\
\end{pmatrix}
+\Rpos
\begin{pmatrix}
%%1 \\
\frac{3-\sqrt{5}}{2} \\
1 \\
\end{pmatrix} \\
&=
\{
\tp(x_1, x_2) \in \Rpos^2 \mid
-x_1^2+3x_1x_2-x_2^2>0
\}
\end{align*}
be an algebraic cone generated by $\tp \left(\frac{3+\sqrt{5}}{2}, 1 \right)$ and $\tp \left(\frac{3-\sqrt{5}}{2}, 1 \right)$. 
Then we can prove that 
\begin{align}\label{eqn zeta}
\zeta_{\Q(\sqrt{5})}(k)=\zeta_{F,+}(\mfa^{-1}, k) \in \frac{1}{\sqrt{5}} \sum_{\substack{\bm (k_1, k_2) \in \Z_{\geq 1}^2\\ k_1+k_2=2k}} \Q \zeta_{C}(k_1, k_2)
\end{align}
for all $k \in \Z_{\geq 2}$, where $\zeta_{\Q(\sqrt{5})}(s)$ is the Dedekind zeta function for $\Q(\sqrt{5})$.  
For example, we find
\begin{align}
\zeta_{\Q(\sqrt{5})}(2) &=
 \frac{1}{5\sqrt{5}}(4\zeta_C(3,1)+3\zeta_C(2,2)), \label{eqn zeta2}\\
% \frac{1}{5\sqrt{5}}(4\zeta_C(1,3)+3\zeta_C(2,2)), \\
\zeta_{\Q(\sqrt{5})}(3) &=
 \frac{1}{25\sqrt{5}}(12\zeta_C(5,1)+18\zeta_C(4,2)+11\zeta_C(3,3)). \label{eqn zeta3}
% \frac{1}{25\sqrt{5}}(12\zeta_C(1,5)+18\zeta_C(2,4)+11\zeta_C(3,3)). 
\end{align}
\end{ex}

\begin{rmk}
Note that the theorem of Siegel-Klingen describes all the critical values of the partial zeta functions in a more beautiful way, e.g., $\zeta_{\Q(\sqrt{5})}(2) = \frac{2 \pi^4}{75\sqrt{5}}$. On the other hand, one feature of the above theorem is that it describes both critical and non-critical values in a uniform way using the conical zeta values associated with algebraic cones. 
\end{rmk}

In the next section (Section \ref{sec main}), after fixing some notation and convention, we state our main theorem (in a slightly more precise way), and then prove the main theorem. 
The proof is actually very simple and elementary. We start from an integral representation of the values of the partial zeta functions which can be seen as a variant of the classical Hecke integral formula, that is, a formula which expresses the values of the partial zeta functions of real quadratic fields as an integral of the Eisenstein series along a closed geodesics on the modular curve. 
One key point of the integral representation in this paper is that we consider a ``partial'' Eisenstein series which can be seen as a decomposed piece of the Eisenstein series along cones (cf.~Remark \ref{rmk psi} (1)). 
Then by using some other also classical techniques such as the unfolding of the integrals and the Shintani cone decomposition, we prove our theorem. 
In Section \ref{sec example}, we present some examples to illustrate our main theorem.

\subsection*{Acknowledgments}
I would like to express my gratitude to Kenichi Bannai for the constant encouragement and valuable comments during the study. 
This work was supported by JSPS KAKENHI Grant Number JP20J01008.

\section{Main Theorem}\label{sec main}

\paragraph{Convention}
\begin{itemize}
\item We fix an integer ${n} \geq 1$ throughout the paper.
\item For a matrix $A$, its transpose is denote by $\tp A$.
\item For a ring $R$, elements in $R^r$ are basically regarded as column vectors, and the matrix algebra $M_r(R)$ acts on $R^r$ by the matrix multiplication form the left.
\item For vectors $v_1, \dots, v_{r'} \in R^r$, we often regard the $r'$-tuple $I=(v_1, \dots, v_{r'})\in (R^r)^{r'}$ as an $r\times r'$-matrix whose columns are $v_1, \dots, v_{r'}$. 
\item For vectors $x=\tp(x_1, \dots, x_{n}), y=\tp(y_1, \dots, y_{n})$, the bracket $\brk{x,y}:=x_1y_1+\cdots +x_{n}y_{n}$ denotes the dot product of $x$ and $y$. 
%\item For a subset $S \subset \R$ and $t \in \R$, we denote by $S_{>t}$ (resp.\ $S_{\geq t}$) the set of $s \in S$ such that $s >t$ (resp.\ $s \geq t$). 
\end{itemize}

\subsection{Cones and conical zeta values}

First we fix some notation and terminologies concerning cones that will be used in this paper.

For $r \geq 0$, $I=(\alpha_1, \dots, \alpha_r) \in (\R^{n} \setm \{0\})^r$, we set
\begin{align}\label{eqn cone}
C_I:= \sum_{i=1}^r \Rpos \alpha_i \subset \R^{n}. 
\end{align}
In the case where $r=0, I=\emptyset$, we set $C_{\emptyset}:=\{0\}$.

\begin{dfn}\label{dfn cone}
\begin{enumerate}
\item 
A subset $C \subset \R^{n}$ is called an \textit{open convex polyhedral cone} if $C$ is of the form $C=C_I$ for some $r \geq 0$, $I=(\alpha_1, \dots, \alpha_r) \in (\R^{n} \setm \{0\})^r$. In this case, we say that $C$ is generated by $\alpha_1, \dots, \alpha_r$. Since in this paper we basically deal only with  {open convex polyhedral cones}, by abuse of notation, we will refer to an {open convex polyhedral cone} simply as a \textit{cone}. % unless otherwise stated. 
\item 
A cone $C \subset \R^{n}$ is said to be \textit{simplicial} if we can take linearly independent generators of $C$, i.e., there exists $I=(\alpha_1, \dots, \alpha_r) \in (\R^{n} \setm \{0\})^r$ such that $C=C_I$ and $\alpha_1, \dots, \alpha_r$ are linearly independent over $\R$. 
\item 
A cone $C \subset \R^{n}$ is said to be \textit{rational} if we can take rational generators of $C$, i.e., there exists $I=(\alpha_1, \dots, \alpha_r) \in (\Q^{n} \setm \{0\})^r$ such that $C=C_I$. 
\item 
More generally, let $K \subset \R$ be a subfield. Then a cone $C \subset \R^{n}$ is said to be \textit{$K$-rational} if there exists $I=(\alpha_1, \dots, \alpha_r) \in (K^{n} \setm \{0\})^r$ such that $C=C_I$. 
\item 
A cone $C \subset \R^{n}$ is said to be \textit{algebraic} if there exists an algebraic subfield $K \subset \R \cap \overline{\Q}$ such that $C$ is $K$-rational. 
\item 
A cone $C \subset \R^{n}$ is said to be \textit{smooth} if there exists $I=(\alpha_1, \dots, \alpha_r) \in (\Z^{n} \setm \{0\})^r$ which can be extended to a basis of $\Z^{n}$ such that $C=C_I$. In particular, a smooth cone is a rational simplicial cone. 
\item 
A cone $C \subset \R^{n}$ is said to be \textit{totally positive} if $C \subset \Rpos^{n}$. 
\end{enumerate}
\end{dfn}

\begin{dfn}
Let $C \subset \Rpos^{n}$ be a totally positive (open convex polyhedral) cone, and let $\bm k=(k_1, \dots, k_{n}) \in \Z_{\geq 1}^{n}$ be a multi-index. Then we define the conical zeta value associated with the cone $C$ with index $\bm k$ to be
\begin{align*}
\zeta_C(\bm k) := \sum_{x \in C\cap \Z^{n}} \frac{1}{x^{\bm k}}
\end{align*}
whenever the sum is convergent, where $x^{\bm k}:= x_1^{k_1}\cdots x_{n}^{k_{n}}$ for  $x=\tp(x_1, \dots, x_{n}) \in C \cap \Z^{n}$. 
\end{dfn}

\subsection{Statement of the main theorem}

Let $F$ be a totally real field of degree ${n}$, and let
\[
\tau_1, \dots, \tau_{n} \colon F \hookrightarrow \R
\]
be the field embeddings of $F$ into $\R$. 
Moreover, we put $F_{\R}:= F \otimes_{\Q}\R$. Then $\tau_1, \dots, \tau_{n}$ induces an isomorphism
\[
\tau=(\tau_1, \dots, \tau_{n}) \colon F_{\R} \isomto \R^{n}. 
\]
For a subset $A \subset F_{\R}$, we denote by $A_+$ its totally positive part, i.e., 
\[
%A_+:=\{ a \in A \mid \tau_i(a)>0 \, \forall i\}.
A_+:=\{ a \in A \mid \forall i \in \{1, \dots, n\}, \tau_i(a)>0\}. 
\]

Let $\mathcal O_F$ denotes the ring of integers of $F$, and let $\mfa \subset F$ be a fractional ideal of $F$.  
Then we define the (narrow) partial zeta function associated with $\mfa^{-1}$ to be 
\begin{align*}
\zeta_{F,+}(\mfa^{-1}, s) := \sum_{x \in \mfa_{+}/\mathcal O_{F,+}^{\times}} \frac{1}{N_{F/\Q}(x)^s}
\end{align*}
for $\re(s)>1$, where $N_{F/\Q}$ is the norm of the extension $F/\Q$.

Finally, we define
\[
F' := \tau_1(F)\cdots \tau_{n}(F) \subset \R
\]
to be the subfield of $\R$ generated by $\tau_1(F), \dots, \tau_{n}(F)$.

The following is the main theorem of this paper. 
\begin{thm}\label{thm main}
Let $F$ and $\mfa$ be as above. Then there exist a finite number of totally positive $F'$-rational cones $C_1, \dots, C_m \subset \Rpos^{n}$ such that for any $k \in \Z_{\geq 2}$, we have
\begin{align}\label{eqn main thm}
\zeta_{F,+}(\mfa^{-1}, k) \in \frac{1}{\sqrt{d_F}} \sum_{i=1}^m \sum_{\substack{\bm k \in \Z_{\geq 1}^{n}\\ |\bm k|={n}k}} \Q \zeta_{C_i}(\bm k), 
\end{align}
where $d_F$ is the discriminant of $F$ and $|\bm k|:=k_1+\cdots +k_{n}$ for $\bm k =(k_1, \dots, k_{n})$ is the weight of the multi-index. In other words, the value $\zeta_{F,+}(\mfa^{-1}, k)$ of the partial zeta function at $k$ can be written as a rational linear combination of the conical zeta values associated with $F'$-rational cones with indices of weight $nk$ divided by the square root of the discriminant. 
\end{thm}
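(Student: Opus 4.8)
The plan is to prove the statement by combining a Hecke-type integral (Mellin) representation with Shintani's cone decomposition, but carried out in the \emph{dual} cone and relative to the \emph{trace-dual} lattice, so that the trace pairing supplies integral coordinates and the relevant linear forms collapse to coordinate functions. First I would start from the elementary Gamma integral: for $\re(s)>1$,
\[
\Gamma(s)^{n}\,\N_{F/\Q}(x)^{-s}=\int_{\Rpos^{n}}e^{-\sum_i t_i\tau_i(x)}\prod_i t_i^{s-1}\,dt,
\]
and observe that the right-hand side is invariant under $x\mapsto\varepsilon x$ for $\varepsilon\in\mathcal{O}_{F,+}^{\times}$ (use $\N_{F/\Q}(\varepsilon)=1$ and the substitution $t_i\mapsto t_i/\tau_i(\varepsilon)$, under which $\prod_i t_i^{s-1}\,dt$ is invariant). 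Choosing a Shintani fundamental domain $\mcD\subset\Rpos^{n}$ for $\mathcal{O}_{F,+}^{\times}$ — a finite union of simplicial cones with generators in $\tau(F)$ — and unfolding the sum over $\mfa_{+}/\mathcal{O}_{F,+}^{\times}$ against the tiling $\Rpos^{n}=\bigsqcup_{\varepsilon}\varepsilon\mcD$, I obtain the \emph{partial Eisenstein} representation
\[
\Gamma(s)^{n}\,\zeta_{F,+}(\mfa^{-1},s)=\int_{\mcD}\Big(\sum_{x\in\mfa_{+}}e^{-\sum_i t_i\tau_i(x)}\Big)\prod_i t_i^{s-1}\,dt,
\]
in which the unit quotient has been moved into the domain $\mcD$ while the inner sum runs over \emph{all} totally positive $x$.

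Next I would subdivide each Shintani cone into simplicial cones that are \emph{smooth with respect to the trace-dual lattice} $\mfa^{\vee}=\{y\in F:\Tr_{F/\Q}(y\mfa)\subseteq\Z\}$ (possible by a toric subdivision, with half-open bookkeeping to keep the pieces disjoint), arranging the generators $\tau(\delta_1),\dots,\tau(\delta_{n})$ to be totally positive with $\delta_1,\dots,\delta_{n}$ a $\Z$-basis of $\mfa^{\vee}$. On such a cone I set $t=\sum_a r_a\tau(\delta_a)$ with $r_a>0$; then $\sum_i t_i\tau_i(x)=\sum_a r_a\Tr_{F/\Q}(x\delta_a)$, while the weight becomes
\[
\prod_i t_i^{s-1}=\prod_i\tau_i\Big(\sum_a r_a\delta_a\Big)^{s-1}=\N_{F/\Q}\Big(\sum_a r_a\delta_a\Big)^{s-1},
\]
a power of the norm form of $\mfa^{\vee}$, which is a polynomial in $r$ with \emph{rational} coefficients. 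At $s=k$ I expand this into monomials $r^{\mathbf c}$ (with $|\mathbf c|=n(k-1)$, rational coefficients) and integrate termwise using $\int_{\Rpos^{n}}e^{-\sum_a r_a\ell_a}r^{\mathbf c}\,dr=\prod_a c_a!\,\ell_a^{-(c_a+1)}$.

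The decisive point is that $\Tr_{F/\Q}(x\delta_a)=(Um)_a$, where $m\in\Z^{n}$ are the coordinates of $x$ in a fixed $\Z$-basis of $\mfa$ and $U=(\Tr_{F/\Q}(\gamma_i\delta_a))$ lies in $GL_{n}(\Z)$ precisely because $\{\delta_a\}$ is a $\Z$-basis of $\mfa^{\vee}$. Thus $m\mapsto Um$ is a bijection of $\Z^{n}$ carrying the totally positive cone $P=\{m:\tau_j(x)>0\ \forall j\}$ to an $F'$-rational, totally positive cone $C=UP$, and summing over $x\in\mfa_{+}$ turns each $\prod_a\Tr_{F/\Q}(x\delta_a)^{-(c_a+1)}$ into the conical zeta value $\zeta_{C}(\mathbf c+\mathbf 1)$, whose index has weight $|\mathbf c+\mathbf 1|=nk$. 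The coefficient attached to each such value is the rational expansion coefficient times $\prod_a c_a!/\Gamma(k)^{n}$ times the covolume $|\det(\tau_j(\delta_a))|$, and this covolume equals a rational multiple of $1/\sqrt{d_F}$ (it is the covolume of $\tau(\mfa^{\vee})$); hence every coefficient lies in $\tfrac{1}{\sqrt{d_F}}\Q$. The cones $C$ so produced depend only on the chosen subdivision, not on $k$, which is exactly the form demanded by the theorem.

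The substantive obstacle, and the place where the idea really does work, is the mismatch between the norm $\N_{F/\Q}(x)=\prod_i\tau_i(x)$ — a product of $n$ linear forms — and the monomial summand $x^{\bm k}$ of a conical zeta value: a naive coordinate change converts one into the other only at the cost of destroying either the lattice $\Z^{n}$ or the region of summation. Resolving this by \emph{unfolding into the dual cone} and choosing the Shintani generators inside $\mfa^{\vee}$, so that the trace pairing gives integral coordinates and a \emph{unimodular} change of variables, is what forces the linear forms to become coordinate functions; the rationality of the coefficients is then a windfall from the norm form being defined over $\Q$. I expect the remaining care to be purely technical: justifying the interchange of summation and integration (absolute convergence holds at $k\ge 2$, where each $\zeta_{C}(\mathbf c+\mathbf 1)$ converges because $C$ is bounded away from the coordinate hyperplanes and $nk>n$), and the half-open bookkeeping making the $\mfa^{\vee}$-smooth subdivision of $\mcD$ an exact disjoint decomposition.
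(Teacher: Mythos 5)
Your argument is correct and is essentially the paper's own proof transported to affine coordinates: your Gamma-integral unfolded over a Shintani fundamental domain is the Hecke-type integral representation of Proposition \ref{prop int rep} (the Feynman parametrization of Proposition \ref{prop feynman} is precisely the projectivization of your identity $\int_{\Rpos^{n}}e^{-\sum_a r_a\ell_a}r^{\mathbf c}\,dr=\prod_a c_a!\,\ell_a^{-(c_a+1)}$), and your subdivision smooth with respect to $\tau(\mfa^{*})$ together with the unimodular trace-pairing matrix $U=(\Tr(\gamma_i\delta_a))$ is the paper's $\Z^{n}$-smooth Shintani decomposition (Proposition \ref{prop shintani}) read through the isomorphism $y\mapsto\sum_j y_j w_j^{*}$, under which your cones $C=UP$ are exactly the paper's $\tp{I}\,T_{w,+}$. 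The remaining differences are cosmetic: the paper works projectively with the $(n-1)$-form $\omega$ on the compact quotient $\Gamma_{w,+}\bs\Delta^{\circ}_{w,+}$ and unfolds the series rather than the domain, whereas your affine version gains manifest positivity of the expanded norm-form coefficients (since all $\tau_i(\delta_a)>0$), so Tonelli justifies every interchange at no real cost.
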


\begin{rmk}
%\begin{enumerate}
%\item 
The $F'$-rational cones $C_1, \dots, C_m \subset \Rpos^{n}$ and the coefficients of $\zeta_{C_i}(\bm k)$ in \eqref{eqn main thm} can be computed using the Shintani cone decomposition (cf.~\eqref{eqn main formula} and Section \ref{sec example}), but they are not necessarily unique.
%\end{enumerate}
\end{rmk}

\begin{rmk}
This theorem shows the connection between the conical zeta values associated with algebraic cones and the values of the partial zeta functions of totally real fields. On the other hand, it does not tell us about the properties of individual conical zeta values. It might be an interesting problem to consider the arithmetic properties of the individual conical zeta values associated with algebraic cones. 
\end{rmk}

The proof of this theorem will be given in Section \ref{sec proof}. 
To this end, we first prepare an integral representation of the values of the partial zeta functions (Proposition \ref{prop int rep}). 
%%We closely follow the argument in \cite[Section 2, Section 7]{bekki}. 
We will use the similar arguments and notations as in \cite[Section 2, Section 7]{bekki}.

\subsection{The Hecke type integral representation}

Let us take a basis $w_1, \dots, w_{n} \in \mfa$ of $\mfa$ over $\Z$, and set
\begin{align*}
w &:=\tp(w_1, \dots, w_{n}) \in \mfa^n \subset F^n, \\
w^{(i)} &:= \tau_i(w)=\tp(\tau_i(w_1), \dots, \tau_i(w_{n})) \in \R^n 
\end{align*}
for $i=1, \dots, {n}$, and 
\begin{align*}
N_w(x):= \prod_{i=1}^{n}\brk{x, w^{(i)}} \in \Q[x_1, \dots, x_n]
\end{align*}
for $x=\tp(x_1, \dots, x_{n})$. 
By replacing $w_1$ with $-w_1$ if necessary, we assume $\det (w^{(1)}, \dots, w^{(n)})>0$, where $(w^{(1)}, \dots, w^{(n)})$ is regarded as an $n\times n$-matrix. 
Then we have 
\begin{align*}
\det (w^{(1)}, \dots, w^{(n)})=\sqrt{d_{F}} N\mfa, 
\end{align*}
where $d_F$ is the discriminant of $F$ and $N\mfa$ is the norm of the fractional ideal $\mfa$. 
Note also that $w$ defines an isomorphisms 
\begin{align}\label{eqn basis}
%\R^{n}  \isomto F_{\R};\  x \mapsto \brk{x,w} 
\vcenter{
\xymatrix@R=0pt{
{\brk{-,w}\colon} \Q^n \ar[r]^-{\sim}&F {;\  x \mapsto \brk{x,w}} \\
\phantom{\brk{-,w}\colon} \cup&\cup \phantom{;\  x \mapsto \brk{x,w}} \\
\phantom{\brk{-,w}\colon} \Z^n \ar[r]^-{\sim}&\mfa \phantom{;\  x \mapsto \brk{x,w}} \\
}
}
\end{align}
which extends also to ${\brk{-,w}\colon} \R^{n}  \isomto F_{\R}$, and $N_w$ is the norm map with respect to this isomorphism, i.e., we have $N_w(x)=N_{F/\Q}(\brk{x,w})$ for $x \in \R^n$.  
We define
\begin{align*}
  T_{w,+}:=&\{x \in \R^{n} \mid \brk{x,w} \in F_{\R, +}\}\\
%%  =& \{x \in \R^n \mid \brk{x, w^{(i)}}>0 \,  \forall i\}
  =& \{x \in \R^n \mid \forall i \in \{1, \dots, n\}, \brk{x, w^{(i)}}>0 \}
\end{align*}
to be the subset of $\R^{n}$ corresponding to the set $F_{\R,+}$ of totally positive elements in $F_{\R}$ under this isomorphism. 

Moreover, let
\begin{align*}
\rho_w \colon F^{\times} \ra GL_{n}(\Q)
\end{align*}
be the regular representation of $F^{\times}$ on $F$ with respect to the isomorphism \eqref{eqn basis}, i.e., 
\begin{align*}
\brk{\rho_w(\alpha)x, w}= \alpha \brk{x,w} \in F
\end{align*}
for $\alpha \in F^{\times}$ and $x \in \Q^{n}$, and set 
\begin{align*}
\Gamma_{w,+}:= \rho_w(\mathcal O_{F,+}^{\times}) \subset SL_{n}(\Z)
\end{align*}
to be the subgroup of $SL_n(\Z)$ corresponding to the totally positive units of $\mathcal O_F$ under the map $\rho_w$. 
%Here, $x \in \Q^{n}$ is regarded as a column vector and $\rho_w(\alpha)$ acts on $x$ as a matrix multiplication from the left.  
Note that $\Gamma_{w,+}$ acts on $T_{w,+}\cap \Z^{n}$. 

Now, let $w^*_1, \dots, w^*_{n} \in F$ be the dual basis of $w_1, \dots, w_{n}$ with respect to the trace $Tr_{F/\Q}$ of the field extension $F/\Q$, i.e., 
\begin{align*}
Tr_{F/\Q}(w_i w^*_j) = \delta_{ij} 
\end{align*}
for $i,j=1, \dots, {n}$, where $\delta_{ij}$ is the Kronecker delta. 
Then it is known that $w^*_1, \dots, w^*_{n}$ form a basis of the fractional ideal 
%\[
%\mfa^* = \{ \alpha \in F \mid Tr_{F/\Q}(\alpha \mfa) \subset \Z\}= \mfa^{-1}\mfd_{F/\Q}^{-1},  
%\]
%where $\mfd_{F/Q}$ is the different of $F/\Q$. 
\[
\mfa^* = \{ \alpha \in F \mid Tr_{F/\Q}(\alpha \mfa) \subset \Z\} \subset F.   
\]
Repeating the above construction, we define
\begin{align*}
w^* &:=\tp(w^*_1, \dots, w^*_{n}) \in F^n, \\
w^{*(i)} &:= \tau_i(w^*)=\tp(\tau_i(w^*_1), \dots, \tau_i(w^*_{n})) \in \R^n, \\
N_{w^*}(x) &:= \prod_{i=1}^n \brk{x, w^{*(i)}} \in \Q[x_1, \dots, x_{n}], \\
\rho_{w^*} &\colon F^{\times} \ra GL_{n}(\Q) 
\end{align*}
starting from the dual basis $w^*_1, \dots, w^*_{n}$.
%The following lemma summarizes the basic facts which will be used in this paper, cf

\begin{lem}\label{lem basic}
  \begin{enumerate}
  \item
    $w^{*(1)}, \dots, w^{*(n)}$ are the dual basis of $w^{(1)}, \dots, w^{(n)}$ with respect to $\brk{-,-}$, i.e., we have
    \[
    \brk{w^{(i)}, w^{*(j)}}=\delta_{ij}.
    \]\label{item dual basis}
%% \item 
%% We have
%% \[
%% T_{w,+}=C_{(w^{*(1)}, \dots, w^{*({n})})}=\sum_{i=1}^{n}\Rpos w^{*(i)}. 
%% \]
%% In particular, $T_{w,+}$ is an $F'$-rational cone.
%% \item
%%   Dually, we have
%%       \[
%%     C_{(w^{(1)}, \dots, w^{({n})})}
%%     =\{x \in \R^n \mid \forall i \in \{1, \dots, n\}, \brk{x,w^{*(i)}}>0\}. 
%%     \]
  \item
    We have
    \begin{align*}
      C_{(w^{(1)}, \dots, w^{({n})})}
      &=\{x \in \R^n \mid \forall i \in \{1, \dots, n\}, \brk{x,w^{*(i)}}>0\}, \\
      C_{(w^{*(1)}, \dots, w^{*({n})})}
    &=\{x \in \R^n \mid \forall i \in \{1, \dots, n\}, \brk{x,w^{(i)}}>0\}=T_{w,+}.
    \end{align*}
    Here recall that $C_{(w^{(1)}, \dots, w^{({n})})}$ (resp.~$C_{(w^{*(1)}, \dots, w^{*({n})})}$) denotes the cone generated by the vectors $w^{(1)}, \dots, w^{({n})}$ (resp.~$w^{*(1)}, \dots, w^{*({n})}$), cf.~\eqref{eqn cone}.
%%    Here recall that $C_{(w^{(1)}, \dots, w^{({n})})}$ and $C_{(w^{*(1)}, \dots, w^{*({n})})}$ denote the cones generated by $w^{(1)}, \dots, w^{({n})}$ and $w^{*(1)}, \dots, w^{*({n})}$), respectively, cf.~\eqref{eqn cone}.
    In particular, we see that $T_{w,+}$ is an $F'$-rational cone. \label{item dual cone}
  \item
    For $\alpha \in F^{\times}$ and $i \in \{1, \dots, n\}$, we have
    \[
    \rho_w(\alpha)w^{*(i)}=\tau_i(\alpha)w^{*(i)}, 
    \]
    i.e., $w^{*(i)}$ is an eigenvector of $\rho_w(\alpha)$ with eigenvalue $\tau_i(\alpha)$.\label{item eigenvector}
  \item
    For $\gamma \in \Gamma_{w,+}$, we have
    \[
N_{w^*}(\tp\gamma x)=N_{w^*}(x). 
    \]\label{item norm}
\end{enumerate}
\end{lem}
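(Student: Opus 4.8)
The plan is to reduce all four statements to elementary linear algebra organised around the two $n\times n$ matrices $W:=(w^{(1)},\dots,w^{(n)})$ and $W^*:=(w^{*(1)},\dots,w^{*(n)})$ whose columns are the indicated vectors (following the matrix convention above), with part \ref{item dual basis} serving as the foundation for the remaining three. For part \ref{item dual basis} I would observe that the matrix $(\brk{w^{(i)},w^{*(j)}})_{i,j}$ equals $\tp W W^*$, with $(i,j)$-entry $\sum_{l=1}^n \tau_i(w_l)\tau_j(w^*_l)$. The key input is the decomposition of the trace form, $\Tr_{F/\Q}(\alpha\beta)=\sum_{k=1}^n \tau_k(\alpha)\tau_k(\beta)$; applying it to $\Tr_{F/\Q}(w_i w^*_j)=\delta_{ij}$ gives $W\,\tp{W^*}=I$. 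Since $\det W=\sqrt{d_F}N\mfa\neq 0$, $W$ is invertible, so $\tp{W^*}=W^{-1}$ and hence $\tp W W^*=I$, which is precisely $\brk{w^{(i)},w^{*(j)}}=\delta_{ij}$.

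For part \ref{item dual cone}, once $\{w^{(i)}\}$ and $\{w^{*(j)}\}$ are known to be dual bases, I would expand an arbitrary $x\in\R^n$ as $x=\sum_i \brk{x,w^{*(i)}}\,w^{(i)}$. As the $w^{(i)}$ are linearly independent, $x$ lies in $C_{(w^{(1)},\dots,w^{(n)})}=\{\sum_i c_i w^{(i)}\mid c_i>0\}$ if and only if every coefficient $\brk{x,w^{*(i)}}$ is positive, giving the first identity; the second follows symmetrically from $x=\sum_i \brk{x,w^{(i)}}\,w^{*(i)}$, and its right-hand side is $T_{w,+}$ by definition. Finally $T_{w,+}$ is generated by the $w^{*(i)}$, whose entries $\tau_i(w^*_l)$ lie in $F'$, so it is $F'$-rational.

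For part \ref{item eigenvector}, I would apply $\tau_i$ to the defining relation $\brk{\rho_w(\alpha)x,w}=\alpha\brk{x,w}$; since $\rho_w(\alpha)$ is rational it commutes with $\tau_i$, yielding $\brk{x,\tp{\rho_w(\alpha)}\,w^{(i)}}=\tau_i(\alpha)\brk{x,w^{(i)}}$ for all $x$, hence $\tp{\rho_w(\alpha)}\,w^{(i)}=\tau_i(\alpha)w^{(i)}$. Pairing $\rho_w(\alpha)w^{*(i)}$ against each $w^{(j)}$ and invoking part \ref{item dual basis} gives $\brk{\rho_w(\alpha)w^{*(i)},w^{(j)}}=\tau_i(\alpha)\delta_{ij}=\brk{\tau_i(\alpha)w^{*(i)},w^{(j)}}$, and since $\{w^{(j)}\}$ is a basis the eigenvector relation follows. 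Part \ref{item norm} is then immediate: writing $\gamma=\rho_w(\epsilon)$ with $\epsilon\in\mathcal O_{F,+}^{\times}$, part \ref{item eigenvector} gives $\brk{\tp\gamma x,w^{*(i)}}=\brk{x,\gamma w^{*(i)}}=\tau_i(\epsilon)\brk{x,w^{*(i)}}$, so $N_{w^*}(\tp\gamma x)=\big(\prod_i\tau_i(\epsilon)\big)N_{w^*}(x)=N_{F/\Q}(\epsilon)N_{w^*}(x)$, and $N_{F/\Q}(\epsilon)=1$ because a totally positive unit has all conjugates positive with product $\pm 1$.

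I do not expect a genuine obstacle, since the content is routine; the one point demanding care is the handling of transposes in part \ref{item eigenvector}, where the embeddings naturally produce eigenvectors of $\tp{\rho_w(\alpha)}$, so reaching the stated relation for $\rho_w(\alpha)$ acting on the $w^{*(i)}$ must be routed through the duality of part \ref{item dual basis}. Everything else flows directly from the decomposition $\Tr_{F/\Q}(\alpha\beta)=\sum_k \tau_k(\alpha)\tau_k(\beta)$ of the trace form.
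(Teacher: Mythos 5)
Your proof is correct and follows essentially the same route as the paper: part (1) is established via the matrix identity between $W$, $W^*$ and the decomposition $\Tr_{F/\Q}(\alpha\beta)=\sum_k\tau_k(\alpha)\tau_k(\beta)$ of the trace form, and parts (2)--(4) are then deduced from (1) and (3) exactly as the paper indicates. The paper states these deductions without detail, so your expansions (unique coordinate expansion in dual bases for (2), the transpose eigenvector argument routed through duality for (3), and $N_{F/\Q}(\epsilon)=1$ for totally positive units in (4)) simply make explicit what the paper leaves to the reader.
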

\begin{proof}
  (1) Set
  \begin{align*}
    W&:=(w^{(1)}, \dots, w^{(n)}) \in GL_n(\R), \\
    W^*&:=(w^{*(1)}, \dots, w^{*(n)}) \in GL_n(\R).
  \end{align*}
  Then since $w_1^*, \dots, w_n^*$ are the dual basis of $w_1, \dots, w_n$ with respect to $Tr_{F/\Q}$, we find
  \[
  W^* \tp W =(Tr_{F/\Q}(w_i^*w_j))_{ij}=(\delta_{ij})_{ij},
  \]
  and hence
  \[
  (\brk{w^{(i)}, w^{*(j)}})_{ij}=\tp W W^*=(\delta_{ij})_{ij}. 
  \]
  
%  (2), (3) These follow easily from (1).
%
%  (4) This follows from (3).
  The assertions (2) and (3) follow from (1), and (4) follows from (3). 
\end{proof}

Next, we consider the following infinite series. 
\begin{dfn}
For $k \in \Z_{\geq 1}$, a subset $C \subset \R^{n}\setm \{0\}$, and $y \in \C^{n} \setm \{0\}$, set
\begin{align*}
\psi_{nk, C}(y) := \sum_{x \in C \cap \Z^{n}} \frac{1}{\brk{x,y}^{{n}+{n}k}} 
\end{align*}
whenever the sum is convergent. 
\end{dfn}

\begin{rmk}\label{rmk psi}
\begin{enumerate}
\item 
In the case where $n=2$, $y=(z,1)$ with $z \in \mfH$ (the upper half plane), the series 
\[
\psi_{2k, C}(y) = \sum_{\tp(x_1, x_2) \in C \cap \Z^{2}} \frac{1}{(x_1z+x_2)^{{2}+{2}k}} 
\]
can be seen as a ``fragment'' of the holomorphic Eisenstein series of weight $2+2k$, in which the sum is restricted to the subset $C$. 
\item 
In \cite{bekki}, we have considered the case where $C$ is (the $Q$-perturbation of) a rational cone, in which case $\psi_{nk, C}$ is essentially a finite sum of the Barnes zeta functions. In this paper, we will consider the case where $C$ is an algebraic cone $T_{w,+}$. 
\end{enumerate}
\end{rmk}

\begin{prop}\label{prop psi}
For $k \in \Z_{\geq 1}$, the infinite series
\begin{align*}
\psi_{nk, T_{w,+}}(y) = \sum_{x \in T_{w,+}\cap \Z^{n}} \frac{1}{\brk{x,y}^{{n}+{n}k}}. 
\end{align*}
converges absolutely and locally uniformly on
\begin{align*}
\widetilde{Y}_{w,+}:=
\{y \in \C^{n} \setm \{0\} \mid \exists \lambda \in \C^{\times}, \forall i \in \{1, \dots, n\}, \re (\brk{w^{*(i)}, \lambda y}) >0\}. 
%\{y \in \C^{n} \setm \{0\} \mid  \forall i, \re (\brk{w^{*(i)}, y}) >0\}. 
\end{align*}
\end{prop}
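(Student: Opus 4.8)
The plan is to use the realization of $T_{w,+}$ as the simplicial cone $C_{(w^{*(1)}, \dots, w^{*(n)})}$ generated by the dual basis (Lemma \ref{lem basic}(2)) in order to bound $|\brk{x,y}|$ from below by a constant multiple of the Euclidean norm $\lVert x\rVert$, and then to compare the series with the convergent sum $\sum_{x \in \Z^n \setm \{0\}} \lVert x\rVert^{-(n+nk)}$, which converges precisely because $n+nk \geq 2n > n$.

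First I would reduce the domain to the cleaner region
\[
Y^{+} := \{y \in \C^n \setm\{0\} \mid \re\brk{w^{*(i)}, y} > 0 \text{ for all } i\}.
\]
Indeed, for $\lambda \in \C^\times$ one has $\brk{x, \lambda y} = \lambda\brk{x, y}$, hence $\psi_{nk, T_{w,+}}(\lambda y) = \lambda^{-(n+nk)}\psi_{nk, T_{w,+}}(y)$; so the map $y \mapsto \lambda y$ carries the convergence problem on $\lambda^{-1}Y^{+}$ to the one on $Y^{+}$ up to a nonzero constant factor. Since $\widetilde{Y}_{w,+} = \bigcup_{\lambda \in \C^\times} \lambda^{-1}Y^{+}$ is an open cover and (local uniform, absolute) convergence is a local property, it suffices to prove the statement on $Y^{+}$.

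The key step is the norm estimate on $Y^{+}$. By Lemma \ref{lem basic} every $x \in T_{w,+}$ is a positive combination $x = \sum_{j=1}^{n} t_j w^{*(j)}$ with coefficients $t_j = \brk{x, w^{(j)}} > 0$ (using the duality $\brk{w^{(i)}, w^{*(j)}} = \delta_{ij}$). Writing $c_i := \brk{w^{*(i)}, y}$, we get $\brk{x, y} = \sum_j t_j c_j$, and since the $t_j$ are positive reals,
\[
|\brk{x, y}| \ge \re\brk{x, y} = \sum_j t_j\,\re c_j \ge \delta \sum_j t_j, \qquad \delta := \min_i \re c_i > 0.
\]
On the other hand $\lVert x\rVert \le \sum_j t_j \lVert w^{*(j)}\rVert \le M\sum_j t_j$ with $M := \max_j \lVert w^{*(j)}\rVert$, whence $|\brk{x,y}| \ge (\delta/M)\lVert x\rVert$. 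Consequently
\[
\sum_{x \in T_{w,+}\cap \Z^n} \frac{1}{|\brk{x,y}|^{n+nk}} \le \Big(\frac{M}{\delta}\Big)^{n+nk} \sum_{x \in \Z^n \setm\{0\}} \frac{1}{\lVert x\rVert^{n+nk}} < \infty,
\]
which gives absolute convergence at each $y \in Y^{+}$.

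For local uniformity I would observe that $\delta$ is the only $y$-dependent quantity in the bound: on a compact neighborhood $\overline{U} \subset Y^{+}$ of a given point, the function $y \mapsto \min_i \re\brk{w^{*(i)}, y}$ is continuous and strictly positive, hence bounded below by some $\delta_0 > 0$, while $M$ is a fixed constant. The estimate above then holds with $\delta$ replaced by $\delta_0$ uniformly for $y \in U$, and the Weierstrass M-test gives locally uniform absolute convergence on $Y^{+}$, which transfers to $\widetilde{Y}_{w,+}$ by the reduction above. I expect no serious obstacle in the core inequality; the only point requiring care is the bookkeeping around the existential $\lambda$ — namely, checking that the strict inequalities defining $Y^{+}$ form an open condition, so that a single $\lambda$ serves on a whole neighborhood, and that one can extract a uniform lower bound $\delta_0$ on a compact subneighborhood.
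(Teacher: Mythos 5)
Your proof is correct, and it takes a genuinely different route from the paper's. The paper makes the same first reduction as you do (by homogeneity, to the region $V=\{y\in\C^n\setm\{0\}\mid \re\brk{w^{*(i)},y}>0 \text{ for all } i\}$), but then, instead of estimating directly, it approximates: for each $y'\in V$ it chooses rational vectors $\alpha_1,\dots,\alpha_n\in\Q^n\cap C_{(w^{(1)},\dots,w^{(n)})}$ with $\re(y')\in C_{(\alpha_1,\dots,\alpha_n)}$, passes to the dual basis $\alpha_1',\dots,\alpha_n'$ so that $T_{w,+}\subset C_I=\sum_i\Rpos\alpha_i'$, and dominates $\psi_{nk,T_{w,+}}$ by $\psi_{nk,C_I}$, whose convergence on a neighborhood of $y'$ is quoted from \cite[Proposition 6.1.2]{bekki} for rational cones. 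You instead prove the needed bound from scratch: writing $x=\sum_j t_j w^{*(j)}$ with $t_j=\brk{x,w^{(j)}}>0$ (Lemma \ref{lem basic} (1),(2)) gives $\lvert\brk{x,y}\rvert\ge\re\brk{x,y}\ge\delta\sum_j t_j\ge(\delta/M)\lVert x\rVert$ with $\delta=\min_i\re\brk{w^{*(i)},y}$, and comparison with $\sum_{x\in\Z^n\setm\{0\}}\lVert x\rVert^{-(n+nk)}$ (convergent since $n+nk>n$) plus a compactness argument for a uniform $\delta_0$ yields locally uniform absolute convergence; your bookkeeping for the existential $\lambda$, via $\psi_{nk,T_{w,+}}(\lambda y)=\lambda^{-(n+nk)}\psi_{nk,T_{w,+}}(y)$ and the open cover $\widetilde{Y}_{w,+}=\bigcup_{\lambda}\lambda^{-1}V$, is exactly what justifies the paper's unelaborated ``it suffices to prove convergence on $V$.'' This is evidently the ``straightforward way'' the paper alludes to before opting for the citation. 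What each buys: your argument is self-contained and makes transparent that convergence has nothing to do with rationality or algebraicity of the cone --- only the simplicial dual-basis structure of $T_{w,+}$ and positivity of $\re\brk{w^{*(i)},y}$ enter; the paper's detour through a rational dominating cone is shorter on the page and reuses machinery from \cite{bekki} that the paper needs anyway, at the cost of an approximation step and an external reference.
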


\begin{proof}
This can be proved in a straightforward way. Here we give a proof which uses \cite[Proposition 6.1.2]{bekki}. 
First, note that it suffices to prove that $\psi_{nk, T_{w,+}}(y)$ converges absolutely and locally uniformly on
\begin{align*}
V:=
\{y \in \C^{n} \setm \{0\} \mid \forall i \in \{1, \dots, n\}, \re (\brk{w^{*(i)}, y}) >0\}. 
%\{y \in \C^{n} \setm \{0\} \mid  \forall i, \re (\brk{w^{*(i)}, y}) >0\}. 
\end{align*}
Let $y'=\tp(y'_1, \dots, y'_{n}) \in V$. By Lemma \ref{lem basic} (\ref{item dual cone}), this is equivalent to 
\[
\re(y')=\tp(\re (y'_1), \dots, \re(y'_{n})) \in C_{(w^{(1)}, \dots, w^{({n})})}=\sum_{i=1}^{n}\Rpos w^{(i)}. 
\]
Then we can take linearly independent $\alpha_1, \dots, \alpha_{n} \in \Q^n \cap C_{(w^{(1)}, \dots, w^{({n})})}$ such that $\re(y') \in C_{(\alpha, \dots, \alpha_{n})}$. 
Let $\alpha_1', \dots, \alpha_{n}' \in \Q^n$ be the dual basis of $\alpha_1, \dots, \alpha_{n}$ with respect to the dot product $\brk{-,-}$. 
Set $I:=(\alpha_1', \dots, \alpha_{n}')$, and 
\begin{align*}
  V_I
  :=&
  \{y \in \C^{n} \setm \{0\} \mid \forall i \in \{1, \dots, n\}, \re (\brk{\alpha_{i}', y}) >0\} \\
  =&
  \{y \in \C^n \setm \{0\} \mid \re(y) \in C_{(\alpha, \dots, \alpha_{n})}\}. 
\end{align*}
Then again by using Lemma \ref{lem basic} (2), we see that $y' \in V_I \subset V$ and that $T_{w,+} \subset C_I=\sum_{i=1}^{n}\Rpos \alpha_i'$. 
Therefore, now it suffices to prove that
\begin{align*}
\psi_{nk, C_I}(y) = \sum_{x \in C_I \cap \Z^{n}} \frac{1}{\brk{x,y}^{{n}+{n}k}} 
\end{align*}
converges absolutely and locally uniformly on $V_I$, and this follows from \cite[Proposition 6.1.2]{bekki}. % (because $C_I \subset C_I^Q$ for any irreducible matrix $Q$).
\end{proof}

Let $\omega$ denotes the $(n-1)$-form on $\C^{n}\setm \{0\}$ defined by
\begin{align*}
\omega(y):= \sum_{i=1}^{n} (-1)^{i-1} y_i dy_1 \wedge \cdots \wedge \check{dy_i} \wedge \cdots \wedge dy_n, 
\end{align*}
where $\check{dy_i}$ means that $dy_i$ is omitted. 
Moreover, let 
\begin{align*}
\pi_{\C}: \C^{n} \setm \{0\} \ra \PPC:= (\C^{n} \setm \{0\})/\C^{\times}
\end{align*}
denotes the natural projection, and set
\begin{align*}
Y_{w,+}:=\pi_{\C}(\widetilde{Y}_{w,+}) \subset \PPC. 
\end{align*}
Note that by Lemma \ref{lem basic} (\ref{item dual cone}), (\ref{item eigenvector}), we see that $\Gamma_{w, +} \subset SL_n(\Z)$ acts on $\widetilde{Y}_{w,+}$ and $Y_{w,+}$ by
\begin{align}\label{eqn action y}
\tp\gamma^{-1}: \widetilde{Y}_{w,+}\isomto \widetilde{Y}_{w,+}; \ y \mapsto \tp\gamma^{-1}y
\end{align}
where $\tp\gamma^{-1}y$ on the right hand side is the usual matrix action of the transposed inverse of $\gamma$ from the left.

\begin{cor} 
For $k \in \Z_{\geq 1}$, the $(n-1)$-form
\begin{align*}
N_{w^*}(y)^{k}\psi_{nk, T_{w,+}}(y) \omega(y)
\end{align*}
on $\widetilde{Y}_{w,+}$ defines a $\Gamma_{w,+}$-invariant closed $(n-1)$-form on $Y_{w,+} \subset \PPC$. 
Here the $\Gamma_{w,+}$-invariance means that we have
\[
N_{w^*}(\tp\gamma^{-1}y)^{k}\psi_{nk, T_{w,+}}(\tp\gamma^{-1}y) \omega(\tp\gamma^{-1}y)
=
N_{w^*}(y)^{k}\psi_{nk, T_{w,+}}(y) \omega(y)
\]
for all $\gamma \in \Gamma_{w,+}$. 
\end{cor}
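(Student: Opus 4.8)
The plan is to reduce all three assertions (descent to $\PPC$, closedness, and $\Gamma_{w,+}$-invariance) to a single homogeneity observation. Write $f(y):=N_{w^*}(y)^{k}\psi_{nk, T_{w,+}}(y)$, so that the form in question is $f\,\omega$. By Proposition \ref{prop psi} the series $\psi_{nk,T_{w,+}}$ converges absolutely and locally uniformly on $\widetilde{Y}_{w,+}$, hence is holomorphic there, and since $N_{w^*}$ is a polynomial, $f$ is holomorphic on $\widetilde{Y}_{w,+}$. The key point I would record first is that $f$ is homogeneous of degree $-n$: the factor $N_{w^*}(y)=\prod_{i=1}^{n}\brk{y,w^{*(i)}}$ has degree $n$, so $N_{w^*}^{k}$ has degree $nk$, while each summand $\brk{x,y}^{-(n+nk)}$ of $\psi_{nk,T_{w,+}}$ has degree $-(n+nk)$; the two degrees add to $-n$.

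For descent and closedness I would bring in the Euler vector field $E=\sum_{i=1}^{n}y_i\,\partial_{y_i}$ and the holomorphic volume form $\Omega=dy_1\wedge\cdots\wedge dy_n$, and note the standard identities $\omega=\iota_E\Omega$, $\iota_E\omega=\iota_E\iota_E\Omega=0$, and $d\omega=n\,\Omega$, the last giving $L_E\omega=\iota_E d\omega=n\,\omega$ by Cartan's formula. Homogeneity of $f$ means $Ef=-nf$. Then $\iota_E(f\omega)=f\,\iota_E\omega=0$ and $L_E(f\omega)=(Ef)\omega+f\,L_E\omega=(-nf+nf)\omega=0$, so $f\omega$ is horizontal and invariant, i.e.\ basic for $\pi_{\C}$, and hence descends to a form on $Y_{w,+}$. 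For closedness, using $dy_j\wedge\omega=y_j\,\Omega$ one gets $df\wedge\omega=(Ef)\,\Omega$, whence $d(f\omega)=df\wedge\omega+f\,d\omega=(Ef+nf)\,\Omega=0$; here it is essential that $f$ is holomorphic, so that $df$ involves no anti-holomorphic differentials. Closedness upstairs together with basicness yields closedness of the descended form on $Y_{w,+}$.

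For the $\Gamma_{w,+}$-invariance I would fix $\gamma\in\Gamma_{w,+}$ and set $\delta:=\tp\gamma^{-1}\in SL_n(\Z)$. Since $\delta$ is linear with $\det\delta=1$, the pullback of $\omega$ along $y\mapsto\delta y$ is again $\omega$ (because $m_\delta^*\Omega=\det(\delta)\,\Omega=\Omega$ and $E$ is invariant under linear maps, so $m_\delta^*\omega=m_\delta^*\iota_E\Omega=\iota_E m_\delta^*\Omega=\omega$). It then suffices to show $f(\delta y)=f(y)$. The factor $N_{w^*}(\tp\gamma^{-1}y)=N_{w^*}(y)$ is exactly Lemma \ref{lem basic}(\ref{item norm}) applied to $\gamma^{-1}\in\Gamma_{w,+}$, using $\tp(\gamma^{-1})=\tp\gamma^{-1}$. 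For the series I would rewrite $\brk{x,\tp\gamma^{-1}y}=\brk{\gamma^{-1}x,y}$ and substitute $x\mapsto\gamma x$; since $\gamma\in SL_n(\Z)$ preserves $\Z^n$ and $\Gamma_{w,+}$ stabilizes the cone $T_{w,+}$ (by Lemma \ref{lem basic}(\ref{item dual cone}),(\ref{item eigenvector}), as $T_{w,+}=C_{(w^{*(1)},\dots,w^{*(n)})}$ and $\rho_w(u)$ scales each $w^{*(i)}$ by the positive number $\tau_i(u)$), this reindexing maps $T_{w,+}\cap\Z^n$ bijectively to itself and gives $\psi_{nk,T_{w,+}}(\delta y)=\psi_{nk,T_{w,+}}(y)$. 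Combining the three factors yields $\phi_\gamma^*(f\omega)=f\omega$.

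All the individual computations are routine, so I do not expect a genuine obstacle. The two points needing care are: correctly tracking the homogeneity degree $-n$, since it is precisely this cancellation that simultaneously forces both closedness and descent; and justifying the reindexing in $\psi$, which is legitimate only because Proposition \ref{prop psi} provides absolute convergence and which in turn rests on the fact that $\Gamma_{w,+}$ genuinely stabilizes $T_{w,+}$ rather than merely permuting pieces of it.
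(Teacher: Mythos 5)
Your proposal is correct and takes essentially the same route as the paper: the same homogeneity-degree-$(-n)$ observation for $N_{w^*}(y)^{k}\psi_{nk,T_{w,+}}(y)$ yielding descent and closedness, and the same three-factor invariance computation via Lemma~\ref{lem basic}~(\ref{item norm}), $\det(\tp\gamma^{-1})=1$, and reindexing the absolutely convergent sum by the $\Gamma_{w,+}$-action on $T_{w,+}\cap\Z^{n}$. The only difference is that you make explicit, via the Euler vector field identities $\iota_E(f\omega)=0$ and $L_E(f\omega)=0$, the standard descent-and-closedness fact that the paper simply invokes.
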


\begin{proof}
  By Proposition \ref{prop psi}, $N_{w^*}(y)^{k}\psi_{nk, T_{w,+}}(y)$ defines a holomorphic function on $\widetilde{Y}_{w,+}$ which is homogeneous of degree $-n$, i.e.,
  \[
  N_{w^*}(\lambda y)^{k}\psi_{nk, T_{w,+}}(\lambda y)=\lambda^{-n}N_{w^*}(y)^{k}\psi_{nk, T_{w,+}}(y)
  \]
  for $\lambda \in \C^{\times}$. Therefore, we see that $N_{w^*}(y)^{k}\psi_{nk, T_{w,+}}(y)\omega(y)$ defines a holomorphic closed $(n-1)$-form on $Y_{w,+} \subset \PPC$. 
  
  To see the $\Gamma_{w,+}$-invariance, let $\gamma \in \Gamma_{w,+}$. First by Lemma \ref{lem basic} (\ref{item norm}), we have $N_{w^*}(\tp\gamma^{-1}y)^{k}=N_{w^*}(y)^{k}$. Moreover, since we see that $\omega(gy)=\det(g)\omega(y)$ for $g \in GL_n(\C)$, we have $\omega(\tp \gamma^{-1}y)=\omega(y)$.
  Finally, since $\Gamma_{w,+}$ is acting on $T_{w,+}\cap \Z^n$, we find
  \begin{align*}
    \psi_{nk, T_{w,+}}(\tp\gamma^{-1}y)
    &=
    \sum_{x \in T_{w,+}\cap \Z^n} \frac{1}{\brk{x, \tp \gamma^{-1}y}^{n+nk}}\\
    &=
    \sum_{x \in T_{w,+}\cap \Z^n} \frac{1}{\brk{\gamma^{-1}x, y}^{n+nk}}\\
    &=
    \sum_{x \in T_{w,+}\cap \Z^n} \frac{1}{\brk{x, y}^{n+nk}}=\psi_{nk,T_{w,+}}(y).\\
  \end{align*}
  This shows the corollary. 
\end{proof}

Now, set
\begin{align*}
\Delta^{\circ}_{n-1}:=\Bigg\{t=\tp(t_1, \dots, t_{n}) \in \Rpos^n \ \Bigg| \  \sum_{i=1}^{n}t_i=1\Bigg\}. 
\end{align*}
We embed $\Delta^{\circ}_{n-1}$ into $\R^{n-1}$ by
\[
\Delta^{\circ}_{n-1} \hookrightarrow \R^{n-1}; \tp(t_1, \dots, t_n) \mapsto \tp(t_2, \dots, t_n)
\]
and equip $\Delta^{\circ}_{n-1}$ with an orientation induced from the standard orientation on $\R^{n-1}$. 

For $I=(\alpha_1, \dots, \alpha_n) \in (\C^n\setm \{0\})^n$ such that $\alpha_1, \dots, \alpha_n$ are a basis of $\C^n$ over $\C$, we define
\begin{align*}
\Delta^{\circ}_{I}:=\pi_{\C} \left(C_{(\alpha_1, \dots, \alpha_n)}\right) \subset \PPC 
\end{align*}
to be the image of the cone $C_{(\alpha_1, \dots, \alpha_n)}=\sum_{i=1}^{n}\Rpos \alpha_i \subset \C^n \setm \{0\}$ in $\PPC$. 
We have an isomorphism
\begin{align*}
\sigma_{I}: \Delta^{\circ}_{n-1} \isomto \Delta^{\circ}_{I} \subset \PPC;\ t=\tp(t_1, \dots, t_{n}) \mapsto \pi_{\C}\Bigg(\sum_{i=1}^{n}t_i \alpha_i\Bigg), 
\end{align*}
and we equip $\Delta^{\circ}_{I}$ with an orientation induced from $\Delta^{\circ}_{n-1}$ via this isomorphism.

Let us consider the case where $\alpha_i=w^{(i)}$ and set
\[
\Delta^{\circ}_{w,+}:=\Delta^{\circ}_{(w^{(1)}, \dots, w^{(n)})}=\pi_{\C}(C_{(w^{(1)}, \dots, w^{(n)})}) \subset \PPC. 
\]
By Lemma \ref{lem basic} (\ref{item dual cone}), we have $C_{(w^{(1)}, \dots, w^{(n)})} \subset \widetilde{Y}_{w,+}$, and hence $\Delta^{\circ}_{w,+} \subset Y_{w,+}$. 
Moreover, by Lemma \ref{lem basic} (\ref{item eigenvector}), we see that the action \eqref{eqn action y} of $\Gamma_{w,+}$ on $\widetilde{Y}_{w,+}$ (resp.~$Y_{w,+}$) preserves $C_{(w^{(1)}, \dots, w^{({n})})}$ (resp.~$\Delta^{\circ}_{w,+}$ and its orientation), and hence by Dirichlet's unit theorem we see that $\Gamma_{w,+}\bs \Delta^{\circ}_{w,+}$ is a compact oriented manifold of dimension $n-1$.

Then we have the following integral representation of $\zeta_{F,+}(\mfa^{-1}, k)$.

\begin{prop}\label{prop int rep}
For $k \in \Z_{\geq 2}$, we have
\begin{align}\label{eqn int formula}
\int_{\Gamma_{w,+}\bs \Delta^{\circ}_{w,+}} 
N_{w^*}(y)^{k-1}\psi_{n(k-1), T_{w,+}}(y) \omega(y)
=
%\frac{((k-1)!)^n \det(w^{(1)}, \dots, w^{({n})})}{(nk-1)!}
%\zeta_{F,+}(\mfa^{-1}, k), 
\frac{((k-1)!)^n \sqrt{d_{F}} N\mfa}{(nk-1)!}
\zeta_{F,+}(\mfa^{-1}, k).  
\end{align}
%%where we regard $(w^{(1)}, \dots, w^{({n})})$ as an $n\times n$-matrix. 
\end{prop}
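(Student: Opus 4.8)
The plan is to evaluate the left-hand side of \eqref{eqn int formula} by the classical unfolding method, reducing the $\Gamma_{w,+}$-quotient integral to a sum of elementary integrals over the full simplex $\Delta^{\circ}_{w,+}$, indexed by the narrow ideal classes, and then to compute each of these by a Gamma-integral.

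First I would exploit the $\Gamma_{w,+}$-invariance and closedness of the form $N_{w^*}(y)^{k-1}\psi_{n(k-1),T_{w,+}}(y)\omega(y)$ established in the corollary above, together with the orbit decomposition of the summation lattice. Since $\Gamma_{w,+}=\rho_w(\mathcal O_{F,+}^{\times})$ acts freely on $T_{w,+}\cap\Z^n$ (a totally positive unit fixing a nonzero element is $1$), and since $\brk{-,w}$ identifies $T_{w,+}\cap\Z^n$ equivariantly with $\mfa_+$, the quotient $\Gamma_{w,+}\bs(T_{w,+}\cap\Z^n)$ is in bijection with $\mfa_+/\mathcal O_{F,+}^{\times}$. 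Writing $\psi_{n(k-1),T_{w,+}}(y)=\sum_{[x_0]}\sum_{\gamma\in\Gamma_{w,+}}\brk{x_0,\tp\gamma y}^{-nk}$ and noting, via Lemma \ref{lem basic} (\ref{item norm}) and $\omega(\tp\gamma y)=\det(\gamma)\omega(y)=\omega(y)$, that each inner summand times $N_{w^*}(y)^{k-1}\omega(y)$ is the pullback under the $\Gamma_{w,+}$-action \eqref{eqn action y} of the form $\eta_{x_0}(y):=N_{w^*}(y)^{k-1}\brk{x_0,y}^{-nk}\omega(y)$, I would unfold (using that $\Gamma_{w,+}\bs\Delta^{\circ}_{w,+}$ is compact and oriented) to obtain
\[
\LHS=\sum_{[x_0]\in\Gamma_{w,+}\bs(T_{w,+}\cap\Z^n)}\int_{\Delta^{\circ}_{w,+}}\eta_{x_0}.
\]
The interchange of summation and integration here is the step demanding the most care, and it is justified by the absolute and locally uniform convergence of Proposition \ref{prop psi} together with the compactness of the quotient.

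Next I would compute each $\int_{\Delta^{\circ}_{w,+}}\eta_{x_0}$ in cone coordinates. Pulling back along $\sigma_{(w^{(1)},\dots,w^{(n)})}$, I write $y=\sum_{i=1}^n s_i w^{(i)}$ with $s_i>0$. The dual-basis relation $\brk{w^{(i)},w^{*(j)}}=\delta_{ij}$ of Lemma \ref{lem basic} (\ref{item dual basis}) gives $N_{w^*}(y)=\prod_{i=1}^n s_i$, while $\brk{x_0,y}=\sum_{i=1}^n\brk{x_0,w^{(i)}}s_i=\sum_{i=1}^n\tau_i(\alpha)s_i$ where $\alpha:=\brk{x_0,w}\in\mfa_+$ is totally positive, and the linear change of variables supplies the Jacobian $\det(w^{(1)},\dots,w^{(n)})=\sqrt{d_F}\,N\mfa$ in $\omega$. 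Restricting to the section $\sum_i s_i=1$, on which $\omega$ reduces to the standard measure on $\Delta^{\circ}_{n-1}$, this expresses $\int_{\Delta^{\circ}_{w,+}}\eta_{x_0}$ as $\sqrt{d_F}\,N\mfa$ times a Dirichlet-type integral of $\prod_i t_i^{k-1}\big/\big(\sum_i\tau_i(\alpha)t_i\big)^{nk}$ over the simplex.

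Finally I would evaluate that simplex integral by the radial trick. Setting $s=rt$ with $r>0$ and $t\in\Delta^{\circ}_{n-1}$, the integral $\int_{(0,\infty)^n}\prod_i s_i^{k-1}e^{-\sum_i\tau_i(\alpha)s_i}\,ds$ factors as $\prod_i\Gamma(k)\tau_i(\alpha)^{-k}=((k-1)!)^n N_{F/\Q}(\alpha)^{-k}$ on one side, and equals $\Gamma(nk)$ times the desired simplex integral on the other, whence $\int_{\Delta^{\circ}_{w,+}}\eta_{x_0}=\frac{((k-1)!)^n\sqrt{d_F}\,N\mfa}{(nk-1)!}\,N_{F/\Q}(\alpha)^{-k}$. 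Summing over $[x_0]$ and recognizing $\sum_{\alpha\in\mfa_+/\mathcal O_{F,+}^{\times}}N_{F/\Q}(\alpha)^{-k}=\zeta_{F,+}(\mfa^{-1},k)$ yields the asserted identity. I expect the genuine obstacle to be solely the rigorous justification of the unfolding (convergence, freeness, and orientation bookkeeping); once that is in place, the remaining steps are routine applications of Lemma \ref{lem basic} and the Gamma integral.
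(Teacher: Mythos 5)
Your proposal is correct and follows essentially the same route as the paper: unfold the integral over $\Gamma_{w,+}\bs\Delta^{\circ}_{w,+}$ via the free $\Gamma_{w,+}$-action on $T_{w,+}\cap\Z^n$ and the invariance of $N_{w^*}$ and $\omega$, then evaluate each orbit integral by the Feynman parametrization with $\alpha_i=w^{(i)}$. The only differences are cosmetic: where the paper simply invokes Proposition \ref{prop feynman}, you re-derive exactly the needed special case by the radial Gamma-integral trick (valid here since $\brk{x_0,w^{(i)}}=\tau_i(\alpha)>0$), and your opening phrase ``indexed by the narrow ideal classes'' should read ``indexed by $\Gamma_{w,+}$-orbits of lattice points,'' i.e.\ by $\mfa_+/\mathcal O_{F,+}^{\times}$, as you in fact correctly use later.
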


In order to prove this proposition, we recall a classical formula known as the Feynman parametrization. 
%See, for example, \cite[Proposition 7.1.3]{bekki} for the proof. 

\begin{prop}[Feynman parametrization]\label{prop feynman}
Let $x \in \C^n \setm \{0\}$, and let $\alpha_1, \dots, \alpha_n \in \C^{n}$ be a basis over $\C$ such that $\re(\brk{x, \alpha_i})>0$ for all $i=1, \dots, n$. Moreover, let $\alpha'_1, \dots, \alpha'_{n} \in \C^n$ be the dual basis of $\alpha_1, \dots, \alpha_n$ with respect to $\brk{-,-}$. 
Set 
\[
\Delta^{\circ}_{(\alpha_1, \dots, \alpha_n)}:=\pi_{\C}(C_{(\alpha_1, \dots, \alpha_n)}) \subset \PPC, 
\]
and equip $\Delta^{\circ}_{(\alpha_1, \dots, \alpha_n)}$ with an orientation as above. Then  for $\bm k=(k_1, \dots, k_n) \in \Z_{\geq 0}^n$, we have
\begin{align*}
\int_{\Delta^{\circ}_{(\alpha_1, \dots, \alpha_n)}} 
\brk{\alpha'_1,y}^{k_1}\cdots \brk{\alpha'_n,y}^{k_n} \frac{\omega(y)}{\brk{x,y}^{n+|\bm k|}}
=
\frac{\bm k!}{(n+|\bm k|-1)!} \frac{\det (\alpha_1, \dots, \alpha_n)}{\brk{x, \alpha_1}^{k_1+1} \cdots \brk{x, \alpha_n}^{k_n+1}},
\end{align*}
where $|\bm k|=k_1+\cdots +k_n$ and $\bm k!=k_1!\cdots k_n!$. %\qed
\end{prop}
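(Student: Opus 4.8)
The plan is to pull everything back to the standard simplex $\Delta^{\circ}_{n-1}$ via the parametrization $\sigma_I$ attached to $I=(\alpha_1, \dots, \alpha_n)$, thereby reducing the stated identity to a classical Dirichlet-type integral that is evaluated with the Gamma function. First I would substitute $y=\sum_{i=1}^{n} t_i \alpha_i$ with $t=\tp(t_1, \dots, t_n) \in \Delta^{\circ}_{n-1}$, which is exactly the map $\sigma_I$ restricted to the chosen representatives. Writing $A:=(\alpha_1, \dots, \alpha_n) \in GL_n(\C)$ for the matrix with columns $\alpha_i$, the substitution $y=At$ together with the transformation rule $\omega(At)=\det(A)\,\omega(t)$ (the very identity $\omega(gy)=\det(g)\omega(y)$ already used in the proof of the Corollary) shows that $\sigma_I^{*}\omega$ equals $\det(\alpha_1, \dots, \alpha_n)$ times $\omega(t)=\sum_{i} (-1)^{i-1} t_i\, dt_1 \wedge \cdots \wedge \check{dt_i} \wedge \cdots \wedge dt_n$. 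A short direct computation on the hyperplane $\sum_i t_i=1$ (where $\sum_i dt_i=0$) then shows that $\omega(t)$ restricts to the standard volume form $dt_2 \wedge \cdots \wedge dt_n$, so that $\sigma_I^{*}\omega=\det(\alpha_1, \dots, \alpha_n)\, dt_2 \wedge \cdots \wedge dt_n$, compatibly with the prescribed orientations.

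Next I would pull back the integrand. Since $\alpha'_1, \dots, \alpha'_n$ is the dual basis with respect to $\brk{-,-}$, we have $\brk{\alpha'_i, y}=\sum_j t_j \brk{\alpha'_i, \alpha_j}=t_i$, while $\brk{x,y}=\sum_j t_j \brk{x,\alpha_j}=\sum_j a_j t_j$ with $a_j:=\brk{x,\alpha_j}$ satisfying $\re(a_j)>0$ by hypothesis. Hence the left-hand side becomes
\[
\det(\alpha_1, \dots, \alpha_n) \int_{\Delta^{\circ}_{n-1}} \frac{t_1^{k_1}\cdots t_n^{k_n}}{(a_1 t_1+\cdots +a_n t_n)^{n+|\bm k|}}\, dt_2 \cdots dt_n,
\]
a genuine real integral over the simplex whose integrand is absolutely integrable.

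It then remains to establish the classical Dirichlet integral
\[
\int_{\Delta^{\circ}_{n-1}} \frac{t_1^{k_1}\cdots t_n^{k_n}}{(a_1 t_1+\cdots +a_n t_n)^{n+|\bm k|}}\, dt_2 \cdots dt_n = \frac{\bm k!}{(n+|\bm k|-1)!}\, \frac{1}{a_1^{k_1+1}\cdots a_n^{k_n+1}}.
\]
I would prove this by the Gamma-function (Schwinger) method: setting $c_j:=k_j+1$ and using $a_j^{-c_j}\Gamma(c_j)=\int_0^{\infty} u_j^{c_j-1} e^{-a_j u_j}\, du_j$, I would take the product over $j$, substitute $u_j=s t_j$ with $s>0$ and $\sum_j t_j=1$ (Jacobian $s^{n-1}$), and integrate out $s$ against $\int_0^{\infty} s^{(\sum_j c_j)-1} e^{-s}\, ds=\Gamma(\sum_j c_j)=(n+|\bm k|-1)!$. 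Since $\prod_j \Gamma(c_j)=\bm k!$, this gives the displayed identity, and multiplying by $\det(\alpha_1, \dots, \alpha_n)$ produces the claimed formula.

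The only genuinely delicate point is the differential-form computation of the first paragraph, namely verifying $\omega(t)|_{\sum_i t_i=1}=dt_2 \wedge \cdots \wedge dt_n$ and keeping the orientation conventions of $\Delta^{\circ}_{n-1}$ and $\Delta^{\circ}_{(\alpha_1, \dots, \alpha_n)}$ consistent with the sign of $\det(\alpha_1, \dots, \alpha_n)$. The analytic core, the Dirichlet integral, is entirely standard once this reduction is in place.
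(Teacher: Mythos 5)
Your proof is correct, and it checks out in detail: the pullback computation $\sigma_I^*\omega=\det(\alpha_1,\dots,\alpha_n)\,dt_2\wedge\cdots\wedge dt_n$ is right (on $\sum_i t_i=1$ one has $dt_1=-\sum_{j\geq 2}dt_j$, and each term of $\omega(t)$ collapses to $t_i\,dt_2\wedge\cdots\wedge dt_n$, summing to the volume form), the identities $\brk{\alpha'_i,y}=t_i$ and $\brk{x,y}=\sum_j a_jt_j$ follow from duality, and the Schwinger step is legitimate for complex $a_j$ with $\re(a_j)>0$ (the representation $a_j^{-c_j}\Gamma(c_j)=\int_0^\infty u^{c_j-1}e^{-a_ju}\,du$ holds there with integer $c_j$, Fubini is justified since $\lvert e^{-\sum a_ju_j}\rvert=e^{-\sum\re(a_j)u_j}$, and the change of variables $u=st$ has Jacobian $s^{n-1}$). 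Note, however, that the paper does not actually prove this proposition: its ``proof'' is a pointer to Hurwitz and to \cite[Proposition 7.1.3]{bekki}. So what you have written is not an alternative route but a self-contained version of the classical argument those references contain --- reduce to the open simplex via the linear parametrization, then evaluate the resulting Dirichlet-type integral by the Gamma-function trick. That is the standard and essentially unique proof of the Feynman parametrization, and your write-up supplies exactly the details the paper delegates; the one point worth stating slightly more carefully is that no orientation sign needs to be absorbed into $\det(\alpha_1,\dots,\alpha_n)$ (which is complex in general): the orientation on $\Delta^{\circ}_{(\alpha_1,\dots,\alpha_n)}$ is \emph{defined} by transport along $\sigma_I$, so the determinant simply appears as the factor in $\sigma_I^*\omega$, and the integrated form is complex-valued with no branch ambiguity since all exponents are integers.
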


\begin{proof}
See, for example, \cite{hurwitz} or \cite[Proposition 7.1.3]{bekki}. 
\end{proof}

\begin{proof}[Proof of Proposition \ref{prop int rep}]
Let $A \subset T_{w,+}\cap \Z^n$ be a system of representatives of $\Gamma_{w,+}\bs (T_{w,+}\cap \Z^n)$. Then by using Proposition \ref{prop feynman} as well as the $\Gamma_{w,+}$-invariance of $N_{w*}$ (Lemma \ref{lem basic} (\ref{item norm})) and $\omega$, we find
\begin{align*}
&\int_{\Gamma_{w,+}\bs \Delta^{\circ}_{w,+}} 
N_{w^*}(y)^{k-1}\psi_{n(k-1), T_{w,+}}(y) \omega(y) \\
=&
\int_{\Gamma_{w,+}\bs \Delta^{\circ}_{w,+}} 
N_{w^*}(y)^{k-1}
\sum_{\gamma \in \Gamma_{w,+}} \sum_{x \in A} \frac{1}{\brk{\gamma x, y}^{nk}}
\omega(y) \\
=&
\sum_{x \in A}
\int_{\Gamma_{w,+}\bs \Delta^{\circ}_{w,+}} 
\sum_{\gamma \in \Gamma_{w,+}} 
N_{w^*}(\tp\gamma y)^{k-1}
\frac{1}{\brk{x, \tp \gamma y}^{nk}}
\omega(\tp \gamma y) \\
=&
\sum_{x \in A}
\int_{\Delta^{\circ}_{w,+}} 
N_{w^*}(y)^{k-1}
\frac{1}{\brk{x, y}^{nk}}
\omega(y)\\
=&
\frac{((k-1)!)^n \det(w^{(1)}, \dots, w^{({n})})}{(nk-1)!}
\sum_{x \in A}
\frac{1}{N_w(x)^k}\\
=&
\frac{((k-1)!)^n \sqrt{d_{F}} N\mfa}{(nk-1)!}
\zeta_{F,+}(\mfa^{-1}, k),  
\end{align*}
where Proposition \ref{prop feynman} is used in the fourth equality with $\alpha_i=w^{(i)}$. 
%This proves the proposition. 
\end{proof}

\subsection{Proof of the main theorem}\label{sec proof}

We also need the following classical fact. 

\begin{prop}[Shintani cone decomposition]\label{prop shintani}
There exists a finite set
\[
%\Phi \subset \coprod_{r=0}^{\infty} (\Z^n \setm \{0\})^r
\Phi \subset \coprod_{r=1}^{n} (C_{(w^{(1)}, \dots, w^{(n)})} \cap \Z^n)^r
\]
satisfying the following conditions: 
\begin{enumerate}
\item[(i)] 
For all $I=(\alpha_1, \dots, \alpha_r) \in \Phi$, the vectors $\alpha_1, \dots, \alpha_r \in C_{(w^{(1)}, \dots, w^{(n)})} \cap \Z^n$ can be extended to a basis of $\Z^n$ over $\Z$. In particular, the cone $C_I$ is a smooth cone for all $I \in \Phi$. 
\item[(ii)] 
We have
\[
C_{(w^{(1)}, \dots, w^{(n)})} = \coprod_{\gamma \in \Gamma_{w,+}} \coprod_{I \in \Phi} \tp \gamma C_I. 
\]
\end{enumerate}
Here $\coprod$ denotes the disjoint union. %\qed
\end{prop}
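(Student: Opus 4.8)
The plan is to construct the finite set $\Phi$ in two stages: first produce a fundamental domain for the $\Gamma_{w,+}$-action on the open simplicial cone $C := C_{(w^{(1)}, \dots, w^{(n)})}$ that is a finite disjoint union of rational simplicial cones, and then subdivide each such cone into smooth ones. Recall that $w^{(1)}, \dots, w^{(n)}$ are linearly independent (the matrix $W = (w^{(1)}, \dots, w^{(n)})$ is invertible, with $\det W = \sqrt{d_F}\,N\mfa > 0$), so $C$ is an open simplicial cone, and it was already observed above that $\Gamma_{w,+}$ preserves $C$; indeed, dualizing Lemma \ref{lem basic} (\ref{item eigenvector}) gives $\tp\gamma\, w^{(i)} = \tau_i(\epsilon)\, w^{(i)}$ for $\gamma = \rho_w(\epsilon)$, so each generating ray is scaled by the positive number $\tau_i(\epsilon)$. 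By Dirichlet's unit theorem $\Gamma_{w,+} \cong \Z^{n-1}$, and the induced action on $\Delta^{\circ}_{w,+}$ is free and properly discontinuous with compact quotient.

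First I would linearize the action via the logarithm map. Writing each point of $C$ as $x = \sum_i t_i w^{(i)}$ with $t_i > 0$ and passing to $(\log t_1, \dots, \log t_n)$, the action of $\gamma = \rho_w(\epsilon)$ becomes translation by the vector $(\log \tau_1(\epsilon), \dots, \log \tau_n(\epsilon))$, which lies in the trace-zero hyperplane $\{\sum_i u_i = 0\}$ since $\prod_i \tau_i(\epsilon) = N_{F/\Q}(\epsilon) = 1$. The image of $\Gamma_{w,+}$ is a full lattice $L$ in this hyperplane, so a fundamental parallelepiped for $L$ pulls back to a fundamental domain for the action on the projectivized cone. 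Its boundary is not polyhedral, so I would replace the bounding affine hyperplanes by rational hyperplanes chosen close enough to preserve the fundamental-domain property; coning back to $C$ and triangulating then yields a finite disjoint decomposition $C = \coprod_{\gamma \in \Gamma_{w,+}} \coprod_{J} \tp\gamma\, C_J$ into relatively open rational simplicial cones $C_J$ of dimensions $1, \dots, n$, where the half-open bookkeeping (assigning each boundary face to exactly one cone) is what forces the appearance of cones of every dimension $r = 1, \dots, n$.

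Next I would refine each rational simplicial cone $C_J$ into smooth cones. Any rational simplicial cone can be partitioned into finitely many relatively open smooth cones: choosing primitive integral generators and inductively subdividing along the lattice points in the fundamental parallelepiped (the continued-fraction algorithm when $n = 2$, and the standard resolution of simplicial lattice cones in general) reduces the lattice index to $1$, and one again keeps track of faces so that the refinement is a disjoint union of relatively open smooth cones. Scaling the primitive generators so that they lie in $C \cap \Z^n$ and collecting the tuples of generators of all resulting smooth cones into a single finite set $\Phi \subset \coprod_{r=1}^n (C \cap \Z^n)^r$ gives conditions (i) and (ii).

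The main obstacle is the second stage: producing an \emph{honest} rational polyhedral fundamental domain. The natural fundamental domain coming from the logarithm map has transcendental, non-polyhedral boundary, so one must argue that the bounding hyperplanes can be replaced by nearby rational ones without destroying the tiling property, and one must organize the resulting pieces into \emph{relatively open} cones so that $C$ is covered exactly once, not merely up to boundary. This disjointness bookkeeping --- which is precisely why $\Phi$ ranges over cones of all dimensions $r = 1, \dots, n$ --- is the delicate part, whereas the smooth subdivision in the third stage is routine toric geometry.
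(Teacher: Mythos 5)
The paper itself does not reprove this statement: its proof is a two-line citation, invoking Shintani \cite{shintani} for the existence of a fundamental set of rational cones satisfying (ii), and the standard toric resolution procedure \cite[Section 11.1]{cls} for the subdivision achieving smoothness (i). Your first stage (eigenvector structure of $\tp\gamma$ on the $w^{(i)}$, the logarithm map, Dirichlet's unit theorem, cocompactness) and your last stage (smooth subdivision with half-open bookkeeping; note that cocompactness also guarantees the pieces stay strictly inside $C_{(w^{(1)},\dots,w^{(n)})}$, so generators do land in $C_{(w^{(1)},\dots,w^{(n)})}\cap\Z^n$) are sound and consistent with what the paper cites.

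The genuine gap is exactly the step you flag but then wave through: ``replace the bounding affine hyperplanes by rational hyperplanes chosen close enough to preserve the fundamental-domain property.'' First, there are no bounding affine hyperplanes: the walls of the exp-pullback of a fundamental parallelepiped are \emph{curved} hypersurfaces in $\Delta^{\circ}_{w,+}$ (exp does not take affine walls in log coordinates to projectively flat ones), so a single nearby rational hyperplane does not exist, and a piecewise-linear approximation reintroduces the matching problem. Second, and more seriously, exact tiling is not an open condition that survives ``close enough'' perturbation. The parallelepiped argument works because each wall $\{s_i=0\}$ is invariant under the translations generated by the \emph{other} units; the curved walls in $\Delta^{\circ}_{w,+}$ inherit this invariance, but no rational hyperplane can, since the only hyperplanes invariant under an infinite-order element $\tp\gamma$ are spanned by subsets of the irrational eigenvectors $w^{(1)},\dots,w^{(n)}$. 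So after perturbation you must instead verify pairwise wall-matching (far wall $=\tp\gamma_i\cdot$ near wall) together with the local incidence structure along all faces of codimension $\geq 2$, via a developing-map or covering-space argument — this global combinatorial verification is precisely the nontrivial content of Shintani's unit theorem, and is why the explicit decomposition used in the paper's Section 3.2 rests on Colmez's Lemme 2.2, which carries hypotheses rather than holding automatically. To repair the argument without citing Shintani wholesale, the standard route is different from yours: cover the compact quotient $\Gamma_{w,+}\bs\Delta^{\circ}_{w,+}$ by images of finitely many rational simplicial cones, note by properness that only finitely many $\Gamma_{w,+}$-translates meet a given cone, and then extract an exact disjoint fundamental set by boolean operations inside the algebra of finite disjoint unions of relatively open rational simplicial cones — after which your half-open smooth subdivision applies verbatim.
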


\begin{proof}
It is well known that there exists $\Phi$ with condition (ii), cf. \cite{shintani}. 
Then by subdividing each cone if necessary, we achieve the condition (i), cf. \cite[Section 11.1]{cls}.  
\end{proof}

\begin{rmk}
Note that actually, there is also a stronger version of this proposition which require $\Phi$ to be a fan, cf. \cite[Chapter III, Corollary 7.6]{amrt}, \cite{ishida}. 
However, for our purpose, Proposition~\ref{prop shintani} is sufficient. 
\end{rmk}

\begin{proof}[Proof of Theorem \ref{thm main}]
We will prove the theorem by computing the left hand side of \eqref{eqn int formula} in a different way from Proposition \ref{prop int rep} using the Shintani cone decomposition.

Take $\Phi$ as in Proposition \ref{prop shintani}, and let 
\begin{align}\label{eqn n-dim}
\Phi^{(n)}:= \Phi \cap (C_{(w^{(1)}, \dots, w^{(n)})} \cap \Z^n)^n, 
\end{align}
%%i.e., $C_I$ is a $n$-dimensional cone for $I \in \Phi^{(n)}$.
i.e., $\Phi^{(n)}$ is exactly the subset of $\Phi$ such that $C_I$ is an $n$-dimensional cone for $I \in \Phi^{(n)}$.
By permuting the order of the vectors if necessary, we may assume 
\[
\det(I)=\det (\alpha_1, \dots, \alpha_n)=1
\]
for all $I=(\alpha_1, \dots, \alpha_n) \in \Phi^{(n)}$. %, where $(\alpha_1, \dots, \alpha_n)$ is regarded as an $n\times n$-matrix.
Note that in the case $n=1$, we automatically have $\alpha_1=1$, because we have assumed that $\det(w^{(1)}, \dots, w^{(n)})=w^{(1)} >0$ and $\alpha_1 \in \Rpos w^{(1)}$.

For $I \in \Phi^{(n)}$, note that we have
\[
\Delta^{\circ}_I=\pi_{\C}\left( C_I \right) \subset \Delta_{w,+} \subset Y_{w,+},
\]
and that the orientation of $\Delta^{\circ}_I$ coincides with the orientation restricted from $\Delta^{\circ}_{w,+}$ because $\det(I)>0$ and $\det(w^{(1)}, \dots, w^{(n)})>0$.
Therefore, by the conditions (i) and (ii) of $\Phi$, we see that
\[
\int_{\Gamma_{w,+}\bs \Delta^{\circ}_{w,+}} \eta = \sum_{I \in \Phi^{(n)}} \int_{\Delta^{\circ}_I} \eta
\]
for any $\Gamma_{w,+}$-invariant $(n-1)$-form on $Y_{w,+}$. 
Hence we find
\begin{align}\label{eqn proof1}
  \begin{split}
&\int_{\Gamma_{w,+}\bs \Delta^{\circ}_{w,+}} 
N_{w^*}(y)^{k-1}\psi_{n(k-1), T_{w,+}}(y) \omega(y) \\
=&
\sum_{I \in \Phi^{(n)}} \int_{\Delta^{\circ}_I}
N_{w^*}(y)^{k-1}
\sum_{x \in T_{w,+}\cap \Z^n} \frac{1}{\brk{x, y}^{nk}}
\omega(y).
\end{split}
\end{align}

Now, regarding each $I \in \Phi^{(n)}$ as an element in $SL_n(\Z)$, we have
\[
\Delta^{\circ}_I=I\Delta^{\circ}_{(e_1, \dots, e_n)},
\]
where $e_1, \dots, e_n$ are the standard basis, i.e., $e_i=\tp(0, \dots, 0, \overset{i}{1}, 0, \dots, 0)$, and hence
\begin{align}\label{eqn proof2}
  \begin{split}
%% &\int_{\Gamma_{w,+}\bs \Delta^{\circ}_{w,+}} 
%% N_{w^*}(y)^{k-1}\psi_{n(k-1), T_{w,+}}(y) \omega(y) \\
%% =
&
\sum_{I \in \Phi^{(n)}} \int_{\Delta^{\circ}_I}
N_{w^*}(y)^{k-1}
\sum_{x \in T_{w,+}\cap \Z^n} \frac{1}{\brk{x, y}^{nk}}
\omega(y) \\
%% =&
%% \sum_{I \in \Phi^{(n)}} \int_{I\Delta^{\circ}_{(e_1, \dots, e_n)}}
%% N_{w^*}(y)^{k-1}
%% \sum_{x \in T_{w,+}\cap \Z^n} \frac{1}{\brk{x, y}^{nk}}
%% \omega(y) \\
=&
\sum_{I \in \Phi^{(n)}} \int_{\Delta^{\circ}_{(e_1, \dots, e_n)}}
N_{w^*}(I y)^{k-1}
\sum_{x \in T_{w,+}\cap \Z^n} \frac{1}{\brk{x, I y}^{nk}}
\omega(Iy) \\
=&
\sum_{I \in \Phi^{(n)}} \int_{\Delta^{\circ}_{(e_1, \dots, e_n)}}
N_{w^*}(I y)^{k-1}
\sum_{x \in \tp I T_{w,+}\cap \Z^n} \frac{1}{\brk{x, y}^{nk}}
\omega(y).
\end{split}
\end{align}
%where $I$ is regarded as an element in $SL_n(\Z)$ for $I \in \Phi^{(n)}$. 
Now, we expand $N_{w^*}(I y)^{k-1}$ and set
\[
N_{w^*}(I y)^{k-1}=: \sum_{\substack{\bm k \in \Z_{\geq 0}^{n}\\ |\bm k|={n}(k-1)}} c_{I,\bmk} y^{\bm k} \in \Q[y_1, \dots, y_n]
\]
for some $c_{I,\bm k} \in \Q$, i.e., $c_{I,\bm k}$ is the coefficient of $y^{\bm k}=y_1^{k_1}\cdots y_n^{k_n}$ in $N_{w^*}(I y)^{k-1}$. 
Then by using Proposition \ref{prop feynman} again (with $\alpha_i=e_i$), we further find
\begin{align}\label{eqn proof3}
  \begin{split}
%% &\int_{\Gamma_{w,+}\bs \Delta^{\circ}_{w,+}} 
%% N_{w^*}(y)^{k-1}\psi_{n(k-1), T_{w,+}}(y) \omega(y) \\
%% =
&
\sum_{I \in \Phi^{(n)}} \int_{\Delta^{\circ}_{(e_1, \dots, e_n)}}
N_{w^*}(I y)^{k-1}
\sum_{x \in \tp I T_{w,+}\cap \Z^n} \frac{1}{\brk{x, y}^{nk}}
\omega(y) \\
=&
\sum_{I \in \Phi^{(n)}}
\sum_{\substack{\bm k \in \Z_{ \geq 0}^{n}\\ |\bm k|={n}(k-1)}}
c_{I,\bm k}
\sum_{x \in \tp I T_{w,+}\cap \Z^n} 
\int_{\Delta^{\circ}_{(e_1, \dots, e_n)}}
y^{\bm k}
\frac{1}{\brk{x, y}^{nk}}
\omega(y) \\
=&
\sum_{I \in \Phi^{(n)}}
\sum_{\substack{\bm k \in \Z_{\geq 0}^{n}\\ |\bm k|={n}(k-1)}}
\frac{\bm k!}{(nk-1)!}
c_{I,\bm k}
\sum_{x \in \tp I T_{w,+}\cap \Z^n} 
\frac{1}{x^{\bm k + \mathbf{1}}} \\
=&
\frac{1}{(nk-1)!}
\sum_{I \in \Phi^{(n)}}
\sum_{\substack{\bm k \in \Z_{\geq 0}^{n}\\ |\bm k|={n}(k-1)}}
\bm k!
c_{I,\bm k}
\zeta_{\tp I T_{w,+}}(\bm k + \mathbf{1}),
\end{split}
\end{align}
where $\mathbf{1}=(1, \dots, 1)$ and $\bm k + \mathbf{1}=(k_1+1, \dots, k_n+1)$ for $\bm k=(k_1, \dots, k_n)$. 
Therefore, by combining Proposition \ref{prop int rep}, \eqref{eqn proof1}, \eqref{eqn proof2}, and \eqref{eqn proof3}, we obtain
%\begin{align*}
%\frac{((k-1)!)^n \sqrt{d_{F}} N\mfa}{(nk-1)!}
%\zeta_{F,+}(\mfa^{-1}, k)
%=
%\frac{1}{(nk-1)!}
%\sum_{I \in \Phi^{(n)}}
%\sum_{\substack{\bm k \in \Z_{> 0}^{n}\\ |\bm k|={n}k}}
%\bm k!
%c_{I,\bm k}
%\zeta_{\tp I T_{w,+}}(\bm k). 
%\end{align*}
\begin{align}\label{eqn main formula}
\zeta_{F,+}(\mfa^{-1}, k)
=
\frac{1}{((k-1)!)^n \sqrt{d_{F}} N\mfa}
\sum_{I \in \Phi^{(n)}}
\sum_{\substack{\bm k \in \Z_{\geq 1}^{n}\\ |\bm k|={n}k}}
(\bm k-\mathbf{1})!
c_{I,\bm k-\mathbf{1}}
\zeta_{\tp I T_{w,+}}(\bm k), 
\end{align}
where $\bm k - \mathbf{1}=(k_1-1, \dots, k_n-1)$. 
Finally, for $I \in \Phi^{(n)} \subset  (C_{(w^{(1)}, \dots, w^{(n)})})^n$, by Lemma \ref{lem basic} (\ref{item dual basis}), we see that $\tp I T_{w,+}$ is a totally positive cone, and by Lemma \ref{lem basic} (\ref{item dual cone}), we see that $\tp I T_{w,+}$ is also $F'$-rational. 
This completes the proof. 
\end{proof}

\section{Examples}\label{sec example}

In this section, we illustrate our main theorem with some examples.

\subsection{The case of $F=\Q(\sqrt{5})$}\label{sec ex sqrt5}

In this subsection, we consider the case where $n=2$, $F=\Q(\sqrt{5})$, and $\mfa = \mathcal O_F=\Z[\frac{1+\sqrt{5}}{2}]$, i.e., Example \ref{ex sqrt5}.

In this case, first we have
\begin{gather*}
\mathcal O_{F}^{\times} =\{\pm 1\} \times \left\{\left(\frac{1+\sqrt{5}}{2}\right)^{\nu} \ \Bigg|\  \nu \in \Z \right\}, \\ 
  \mathcal O_{F,+}^{\times} =\left\{\left(\frac{3+\sqrt{5}}{2}\right)^{\nu} \ \Bigg|\  \nu \in \Z \right\}
%
%% \mathcal O_{F}^{\times} =\{\pm 1\} \times \left\{\varphi^{\nu} \ |\  \nu \in \Z \right\}, \\ 
%% \mathcal O_{F,+}^{\times} =\left\{\varepsilon^{\nu} \ |\  \nu \in \Z \right\},
\end{gather*}
and hence
\begin{align*}
\zeta_{F,+}(\mfa^{-1}, s)
&= \sum_{x \in \mathcal O_{F,+}/\mathcal O_{F,+}^{\times}} \frac{1}{N_{F/\Q}(x)^s} \\
&= \sum_{x \in (\mathcal O_F - \{0\})/\mathcal O_F^{\times}} \frac{1}{|N_{F/\Q}(x)|^s} \\
&=\zeta_{\Q(\sqrt{5})}(s),
\end{align*}
where $\zeta_{\Q(\sqrt{5})}(s)$ is the Dedekind zeta function of $\Q(\sqrt{5})$.
Moreover, we have $d_F=5$. 

%% Now, let us choose $w:=(\varphi, 1)$ as a basis of $\mfa/\Z$. 
Now, let us choose $w:=\tp\left(\frac{1+\sqrt{5}}{2}, 1\right)$ as a basis of $\mfa$ over $\Z$. 
Then the dual basis $w^*=\tp(w_1^*,w_2^*)$, the dual norm polynomial $N_{w^*}$, and the $F$-rational cone $T_{w,+}$ can be computed as follows:
%% \begin{gather*}
%%   w^*=\tp\left(\frac{1}{\sqrt{5}}, \frac{-1+\sqrt{5}}{2\sqrt{5}}\right), \\
%% %%   N_w(x_1, x_2)=-x_1^2+x_1x_2+x_2^2,\\
%%   N_{w^*}(x_1, x_2)=\frac{1}{5}(-x_1^2+x_1x_2+x_2^2), \\
%% %% \end{gather*}
%% %% \begin{align*}
%%   T_{w,+}=\Rpos \left(\frac{1}{\sqrt{5}}, \frac{-1+\sqrt{5}}{2\sqrt{5}}\right) + \Rpos \left(-\frac{1}{\sqrt{5}}, \frac{1+\sqrt{5}}{2\sqrt{5}}\right)\\
%% \phantom{T_{w,+}}  =\Rpos \left(\frac{1+\sqrt{5}}{2}, 1\right) + \Rpos \left(\frac{1-\sqrt{5}}{2}, 1\right). 
%% %\end{align*}
%% \end{gather*}
\begin{gather*}
  w^*=
  \tp\left(\frac{1}{\sqrt{5}}, \frac{-1+\sqrt{5}}{2\sqrt{5}}\right), \\
%%   N_w(x_1, x_2)=-x_1^2+x_1x_2+x_2^2,\\
  N_{w^*}(x_1, x_2)=\frac{1}{5}(-x_1^2+x_1x_2+x_2^2), \\
%% \end{gather*}
%% \begin{align*}
  T_{w,+}=
  \Rpos
\begin{pmatrix}
\frac{1}{\sqrt{5}} \\
\frac{-1+\sqrt{5}}{2\sqrt{5}} \\
\end{pmatrix}
+ \Rpos
\begin{pmatrix}
-\frac{1}{\sqrt{5}} \\
\frac{1+\sqrt{5}}{2\sqrt{5}} \\
\end{pmatrix}
\\
\phantom{T_{w,+}}  =
\Rpos
\begin{pmatrix}
  \frac{1+\sqrt{5}}{2} \\
  1\\
\end{pmatrix}
+ \Rpos
\begin{pmatrix}
  \frac{1-\sqrt{5}}{2} \\
  1\\
\end{pmatrix}. 
%\end{align*}
\end{gather*}
Moreover, the cone decomposition $\Phi$ in the sense of Proposition \ref{prop shintani} can be taken as $\Phi=\{I, J\}$ with
\begin{align*}
  I=
  \left(
  \begin{pmatrix}
    1\\
    1\\
  \end{pmatrix},
  \begin{pmatrix}
    0\\
    1\\
  \end{pmatrix}
  \right),\quad
  J=
  \left(
  \begin{pmatrix}
    0\\
    1\\
  \end{pmatrix}
  \right).  
\end{align*}
Then its two-dimensional part $\Phi^{(2)}$ (cf. \eqref{eqn n-dim}) becomes $\Phi^{(2)}=\{I\}$. 

Therefore,
by putting
\begin{align*}
  C:= \tp I T_{w,+} =
  \Rpos
\begin{pmatrix}
  \frac{3+\sqrt{5}}{2} \\
  1\\
\end{pmatrix}
+ \Rpos
  \begin{pmatrix}
  \frac{3-\sqrt{5}}{2} \\
  1\\
\end{pmatrix}, 
\end{align*}
%%we see from \eqref{eqn main formula} in the proof of Theorem \ref{thm main} that
we obtain the following from \eqref{eqn main formula} in the proof of Theorem \ref{thm main}.
\begin{cor}
For $k \in \Z_{\geq 2}$, we have 
\begin{align*}
\zeta_{\Q(\sqrt{5})}(k)
=
\frac{1}{((k-1)!)^2 \sqrt{5}}
\sum_{\substack{(k_1, k_2) \in \Z_{\geq 1}^{2}\\ k_1+k_2=2k}}
(k_1-1)!(k_2-1)!
c_{I,(k_1-1, k_2-1)}
\zeta_{C}(k_1, k_2), 
\end{align*}
where $c_{I,(k_1-1, k_2-1)}$ is the coefficient of $y_1^{k_1-1}y_2^{k_2-1}$ in
\begin{align*}
  N_{w^*}(Iy)^{k-1}&=N_{w^*}(y_1,y_1+y_2)^{k-1}
  = \left(\frac{1}{5}(y_1^2+3y_1y_2+y_2^2)\right)^{k-1}. 
\end{align*}
\end{cor}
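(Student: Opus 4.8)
The plan is to obtain this corollary as the specialization of the master formula \eqref{eqn main formula} to $F = \Q(\sqrt5)$, $n = 2$, and $\mfa = \mcO_F$, simply reading off every datum that enters its right-hand side. Since \eqref{eqn main formula} is already established, no new analysis is required; the task is to make each ingredient explicit and substitute.

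First I would record the two global quantities in the prefactor of \eqref{eqn main formula}: the discriminant $d_F = 5$ and the ideal norm $N\mfa = N\mcO_F = 1$. In parallel I would identify the left-hand side. From the descriptions of $\mcO_F^\times$ and $\mcO_{F,+}^\times$, together with the fact that every nonzero element of $\mcO_F$ is, up to sign, associate to a totally positive one, it follows that $\zeta_{F,+}(\mfa^{-1}, s) = \zeta_{\Q(\sqrt5)}(s)$; hence the quantity produced by \eqref{eqn main formula} is exactly the Dedekind zeta value.

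The substantive step is to fix the basis $w = \tp(\frac{1+\sqrt5}{2}, 1)$ of $\mfa$ and to assemble the derived objects $w^*$, $N_{w^*}$, $T_{w,+}$, and a Shintani decomposition $\Phi$ as in Proposition \ref{prop shintani}. Here I would propose $\Phi = \{I, J\}$ with $I$ generated by $\tp(1,1), \tp(0,1)$ and $J$ the single ray $\tp(0,1)$, and then verify conditions (i) and (ii). Condition (i) is immediate, since $\det(\tp(1,1), \tp(0,1)) = 1$, so these vectors form a $\Z$-basis and $C_I$ is smooth. For condition (ii) I would use that $\Gamma_{w,+}$ is generated by $\gamma_0 = \rho_w(\frac{3+\sqrt5}{2})$ coming from the fundamental totally positive unit, and that the cone action ${}^t\gamma_0$ sends $\tp(0,1)$ to $\tp(1,1)$; consequently the translates ${}^t\gamma_0^{\,\nu} C_I$ are glued edge-to-edge and sweep out the interior of $C_{(w^{(1)}, w^{(2)})}$ as $\nu$ ranges over $\Z$, their shared rays accumulating on the two boundary rays of the cone. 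This verification of the tiling is the one point that calls for genuine checking rather than bookkeeping.

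Once $\Phi$ is fixed, its two-dimensional part is the single cone $\Phi^{(2)} = \{I\}$, so the outer sum in \eqref{eqn main formula} collapses to one term. I would then set $C := \tp I T_{w,+}$, compute its generators $\tp(\frac{3\pm\sqrt5}{2}, 1)$, and expand $N_{w^*}(Iy)^{k-1} = N_{w^*}(y_1, y_1+y_2)^{k-1} = \left(\tfrac15(y_1^2 + 3y_1y_2 + y_2^2)\right)^{k-1}$ to extract the rational coefficients $c_{I,(k_1-1,k_2-1)}$. Substituting $n = 2$, $d_F = 5$, $N\mfa = 1$, and $(\bmk - \mathbf{1})! = (k_1-1)!(k_2-1)!$ into \eqref{eqn main formula} then reproduces the asserted identity verbatim.
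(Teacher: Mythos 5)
Your proposal is correct and follows essentially the same route as the paper: specializing the master formula \eqref{eqn main formula} to $n=2$, $F=\Q(\sqrt5)$, $\mfa=\mcO_F$ with the basis $w=\tp(\frac{1+\sqrt5}{2},1)$, the decomposition $\Phi=\{I,J\}$, $\Phi^{(2)}=\{I\}$, and $C=\tp I\,T_{w,+}$, exactly as in Section 3.1. Your explicit verification of the tiling condition (ii) via $\gamma_0=\rho_w(\frac{3+\sqrt5}{2})$, with $\tp\gamma_0$ sending $\tp(0,1)$ to $\tp(1,1)$ and the translated rays accumulating on the boundary eigenrays $\Rpos w^{(1)},\Rpos w^{(2)}$, is sound and in fact slightly more detailed than the paper, which simply asserts that this $\Phi$ works.
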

This shows \eqref{eqn zeta} in Example \ref{ex sqrt5}.
In the cases where $k=2,3$, the coefficients $c_{I,(k_1-1, k_2-1)}$ can be computed by
\begin{align*}
    N_{w^*}(Iy)^{2-1}&=\frac{1}{5}(y_1^2+3y_1y_2+y_2^2), \\
    N_{w^*}(Iy)^{3-1}&=\frac{1}{25}(y_1^4+6y_1^3y_2+11y_1^2y_2^2+6y_1y_2^3+y_2^4). 
\end{align*}
Moreover, we see that $\zeta_C(k_1,k_2)=\zeta_C(k_2, k_1)$ from a simple observation that $(x_1,x_2)\in C$ if and only if $(x_2,x_1)\in C$.
Thus we find 
\begin{align*}
\zeta_{\Q(\sqrt{5})}(2)
&=
\frac{1}{5\sqrt{5}}
\left(
2\zeta_C(3,1) + 3\zeta_C(2,2) +2\zeta_C(1,3) 
\right) \\
&=
\frac{1}{5\sqrt{5}}
\left(
4\zeta_C(3,1) + 3\zeta_C(2,2)
\right), \\
\zeta_{\Q(\sqrt{5})}(3)
&=
\frac{1}{4 \cdot 25\sqrt{5}}
(
24 \zeta_C(5,1) + 6 \cdot 6 \zeta_C(4,2) +4 \cdot 11\zeta_C(3,3) \\
&\quad\quad\quad\quad\quad\quad\quad +6 \cdot 6\zeta_C(2,4) +24 \zeta_C(1,5)
) \\
&=
\frac{1}{25\sqrt{5}}
\left(
12 \zeta_C(5,1) + 18 \zeta_C(4,2) + 11\zeta_C(3,3) 
\right), 
\end{align*}
which shows \eqref{eqn zeta2} and \eqref{eqn zeta3} in Example \ref{ex sqrt5}.

\subsection{The case of $F=\Q(\cos(\frac{2\pi}{7}))$}

Let
\begin{align*}
\eta=\eta^{(1)}:=2\cos(\frac{2\pi}{7}), \quad \eta^{(2)}:=2\cos(\frac{4\pi}{7}), \quad \eta^{(3)}:=2\cos(\frac{6\pi}{7})
\end{align*}
be the three roots of the cubic polynomial $X^3+X^2-2X-1$.
Note that we have $\eta^{(2)}=\eta^2-2$, $\eta^{(3)}=-\eta^2-\eta+1$. Thus $F:=\Q(\eta)$ is a totally real cubic field, and it is known that its ring of integers is $\Z[\eta]$ whose ideal class group is trivial and $d_F=49$.

In this subsection, we consider the case where $n=3$, $F=\Q(\eta)$, $\mfa=\mathcal O_F=\Z[\eta]$.
%In this subsection, we consider the case where $n=3$, $F=\Q(\eta)$.
In this case, by setting
\begin{align*}
\varepsilon_1:= \eta^2-1, \quad \varepsilon_2:=\eta^2+\eta-2,
\end{align*}
it is known that
\begin{gather*}
  \mathcal O_F^{\times}=\{\pm 1\}\times \varepsilon_1^{\Z} \times \varepsilon_2^{\Z},\\
  \mathcal O_{F,+}^{\times}=\varepsilon_1^{2\Z} \times \varepsilon_2^{2\Z},  
\end{gather*}
and hence
\begin{align*}
\zeta_F(\mfa^{-1},s)=\zeta_{\Q(\eta)}(s), 
\end{align*}
where $\zeta_{\Q(\eta)}(s)$ is the Dedekind zeta function of $F=\Q(\eta)$.

Now, let us choose $w:=\tp(\eta^2,\eta,1)$ as a basis of $\mfa$ over $\Z$. Then the dual basis $w^*=(w_1^*, w_2^*, w_3^*)$, the dual norm polynomial $N_{w^*}$, and the $F'(=F)$-rational cone $T_{w,+}$ can be computed as follows:
\[
%w^*=\tp(2/7\eta^2 + 1/7\eta - 3/7, 1/7\eta^2 + 2/7\eta - 1/7, -3/7\eta^2 - 1/7\eta + 1)\\
w^*=\tp \left(\frac{1}{7}(2\eta^2 + \eta - 3), \frac{1}{7}(\eta^2 + 2\eta - 1), \frac{1}{7}(-3\eta^2 - \eta + 7)\right),
\]
\begin{align*}
N_{w^*}(x_1,x_2,x_3)=
\frac{1}{49}
(&-x_1^3 -2x_1^2x_2 + 2x_1^2x_3 + x_1x_2^2 + 5x_1x_2x_3 \\
&+ x_1x_3^2 + x_2^3 - 3x_2^2x_3 - 4x_2x_3^2 - x_3^3),
\end{align*}
\begin{align*}
T_{w,+}=\sum_{i=1}^3\Rpos w^{*(i)},
\end{align*}
with
%%and $T_{w,+}=\sum_{i=1}^3\Rpos w^{*(i)}$ with
\[
w^{(i)}=\tp \left(\frac{1}{7}(2(\eta^{(i)})^2 + \eta^{(i)} - 3), \frac{1}{7}((\eta^{(i)})^2 + 2\eta^{(i)} - 1), \frac{1}{7}(-3(\eta^{(i)})^2 - \eta^{(i)} + 7)\right)
\]
for $i=1,2,3$.

Next, we describe the cone decomposition. 
Put 
\begin{align*}
  \alpha_0:=
  \begin{pmatrix}
    1\\
    0\\
    1\\
  \end{pmatrix}, 
  \alpha_1:=
  \begin{pmatrix}
    2\\
    -1\\
    1\\
  \end{pmatrix}, 
  \alpha_2:=
  \begin{pmatrix}
    1\\
    -1\\
    3\\
  \end{pmatrix}, 
  \alpha_3:=
  \begin{pmatrix}
    1\\
    -1\\
    2\\
  \end{pmatrix}, 
  \alpha_4:=
  \begin{pmatrix}
    2\\
    -1\\
    2\\
  \end{pmatrix}, 
\end{align*}
and set
\begin{align*}
&  I_1:=(\alpha_0, \alpha_2, \alpha_3),
  I_2:=(\alpha_0, \alpha_3, \alpha_4),
  I_3:=(\alpha_0, \alpha_4, \alpha_1),
  I_4:=(\alpha_4, \alpha_3, \alpha_1),\\
&  I_5:=(\alpha_0, \alpha_1),
  I_6:=(\alpha_0, \alpha_2),
  I_7:=(\alpha_0, \alpha_3),
  I_8:=(\alpha_0, \alpha_4),\\
&  I_9:=(\alpha_1, \alpha_4),
  I_{10}:=(\alpha_3, \alpha_4),
  I_{11}:=(\alpha_0),
  I_{12}:=(\alpha_1). 
\end{align*}
Note that $\alpha_0, \alpha_1, \alpha_2, \alpha_3$ are chosen so that $\alpha_1=\rho_{w^*}(\varepsilon_1^2)\alpha_0$, $\alpha_2=\rho_{w^*}(\varepsilon_2^2)\alpha_0$, $\alpha_3=\rho_{w^*}(\varepsilon_1^2\varepsilon_2^2)\alpha_0$, and $\alpha_4$ is an auxiliary vector to make cones smooth. 
Then, by using \cite[Lemme 2.2]{colmez} with totally positive fundamental units $\varepsilon_1^2, \varepsilon_2^2$, we find that $\Phi=\{I_i \mid i=1, \dots, 12\}$ gives a cone decomposition in the sense of Proposition \ref{prop shintani}. 
In this case, the three-dimensional part $\Phi^{(3)}$ becomes
\[
\Phi^{(3)}=\{I_1, I_2, I_3, I_4\}. 
\]

Therefore, by setting
\begin{align*}
C_i:= \tp I_i T_{w,+}, \quad i=1,\dots, 4,
\end{align*}
we obtain the following form \eqref{eqn main formula}.

\begin{cor}
For $k \in \Z_{\geq 2}$, we have 
%% \begin{align*}
%% \zeta_{\Q(\eta)}(k)
%% =
%% \frac{1}{7 ((k-1)!)^3}
%% \sum_{i=1}^4
%% \sum_{\substack{(k_1, k_2,k_3) \in \Z_{> 0}^{3}\\ k_1+k_2+k_3=3k}}
%% (k_1-1)!(k_2-1)!(k_3-1)!
%% c_{I_i,(k_1-1, k_2-1,k_3-1)}
%% \zeta_{C_i}(k_1, k_2, k_3), 
%% \end{align*}
%% where $c_{I_i,(k_1-1, k_2-1, k_3-1)}$ is the coefficient of $y_1^{k_1-1}y_2^{k_2-1}y_3^{k_3-1}$ in $N_{w^*}(I_iy)^{k-1}$.
\begin{align*}
\zeta_{\Q(\eta)}(k)
=
\frac{1}{7 ((k-1)!)^3}
\sum_{i=1}^4
\sum_{\substack{\bm k \in \Z_{\geq 1}^{3}\\ |\bm k|=3k}}
(\bm k - \mathbf{1})!
c_{I_i,\bm k - \mathbf{1}}
\zeta_{C_i}(\bm k), 
\end{align*}
%%where $c_{I_i,(k_1, k_2, k_3)}$ is the coefficient of $y_1^{k_1}y_2^{k_2}y_3^{k_3}$ in $N_{w^*}(I_iy)^{k-1}$.
where $c_{I_i,\bm k - \mathbf{1}}$ is the coefficient of $y_1^{k_1-1}y_2^{k_2-1}y_3^{k_3-1}$ in $N_{w^*}(I_iy)^{k-1}$ for $\bm k=(k_1, k_2, k_3)$.
\end{cor}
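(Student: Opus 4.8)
The plan is to obtain this corollary as the specialization of the master formula \eqref{eqn main formula} from the proof of Theorem \ref{thm main} to the data $F=\Q(\eta)$, $n=3$, $\mfa=\mathcal O_F=\Z[\eta]$. Recall that \eqref{eqn main formula} reads
\[
\zeta_{F,+}(\mfa^{-1}, k)=\frac{1}{((k-1)!)^n\sqrt{d_F}\,N\mfa}\sum_{I\in\Phi^{(n)}}\sum_{\substack{\bm k\in\Z_{\geq 1}^n\\ |\bm k|=nk}}(\bm k-\mathbf{1})!\,c_{I,\bm k-\mathbf{1}}\,\zeta_{\tp I T_{w,+}}(\bm k),
\]
so that once I confirm $n=3$, $d_F=49$ (hence $\sqrt{d_F}=7$), $N\mfa=1$, $\zeta_{F,+}(\mfa^{-1},s)=\zeta_{\Q(\eta)}(s)$, and $\Phi^{(3)}=\{I_1,I_2,I_3,I_4\}$ with $C_i=\tp I_i T_{w,+}$, the assertion follows at once. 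The field-theoretic inputs are classical: $\eta=2\cos(2\pi/7)$ generates the maximal real subfield $\Q(\zeta_7)^+$, a cyclic cubic field with ring of integers $\Z[\eta]$, trivial class group, and discriminant $7^2=49$ by the conductor--discriminant formula; since $F/\Q$ is Galois we moreover have $F'=F$. The identity $\zeta_{F,+}(\mfa^{-1},s)=\zeta_{\Q(\eta)}(s)$ comes from $\mfa=\mathcal O_F$ together with the stated unit computation $\mathcal O_{F,+}^{\times}=\varepsilon_1^{2\Z}\times\varepsilon_2^{2\Z}$, which forces the narrow and ordinary class groups to coincide.

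Next I would verify the linear-algebra data over $F$. Starting from the basis $w=\tp(\eta^2,\eta,1)$ of $\mfa$, I would compute the dual basis $w^*$ with respect to $\mathrm{Tr}_{F/\Q}$, confirm the displayed formulas for $w^*$ and the dual norm polynomial $N_{w^*}$, and check the normalization $\det(w^{(1)},w^{(2)},w^{(3)})=\sqrt{d_F}\,N\mfa=7>0$, reordering the embeddings $\tau_i$ if necessary. By Lemma \ref{lem basic}\,(\ref{item dual basis}) and (\ref{item dual cone}), each cone $C_i=\tp I_i T_{w,+}$ is then automatically totally positive and $F'$-rational, which already delivers the qualitative content of the main theorem in this instance.

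The substantive step is to justify that $\Phi=\{I_1,\dots,I_{12}\}$ is a Shintani cone decomposition in the sense of Proposition \ref{prop shintani} and that its three-dimensional part is exactly $\{I_1,I_2,I_3,I_4\}$. Here I would apply \cite[Lemme 2.2]{colmez} with the totally positive fundamental units $\varepsilon_1^2,\varepsilon_2^2$: the vectors $\alpha_0,\alpha_1,\alpha_2,\alpha_3$ form a single $\rho_{w^*}$-orbit of $\alpha_0$ under the unit group, via $\alpha_1=\rho_{w^*}(\varepsilon_1^2)\alpha_0$, $\alpha_2=\rho_{w^*}(\varepsilon_2^2)\alpha_0$, $\alpha_3=\rho_{w^*}(\varepsilon_1^2\varepsilon_2^2)\alpha_0$, so that Colmez's recipe produces a fundamental domain for the $\Gamma_{w,+}$-action on $C_{(w^{(1)},w^{(2)},w^{(3)})}$ assembled from simplicial cones on these vectors. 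I would then check condition (i) by exhibiting, for each $I_i$ with $i\leq 4$, that its generators extend to a $\Z$-basis, equivalently that $\det I_i=\pm 1$, and reorder so that $\det I_i=1$; the auxiliary vector $\alpha_4$ is introduced precisely to subdivide the region into such smooth cones. I expect the verification of the disjoint-union condition (ii) together with the smoothness condition (i) to be the main obstacle, since it requires careful bookkeeping of how the lower-dimensional cones $I_5,\dots,I_{12}$ account for the shared boundary faces, after which the three-dimensional pieces $I_1,\dots,I_4$ tile the interior and constitute $\Phi^{(3)}$. With $\Phi^{(3)}$ in hand, substituting $n=3$, $\sqrt{d_F}=7$, and $N\mfa=1$ into the master formula yields precisely the claimed identity, the coefficient $c_{I_i,\bm k-\mathbf{1}}$ being that of $y_1^{k_1-1}y_2^{k_2-1}y_3^{k_3-1}$ in $N_{w^*}(I_iy)^{k-1}$.
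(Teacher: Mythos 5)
Your proposal is correct and follows essentially the same route as the paper: the corollary is obtained by specializing the master formula \eqref{eqn main formula} with $n=3$, $\sqrt{d_F}=7$, $N\mfa=1$, and $\zeta_{F,+}(\mfa^{-1},s)=\zeta_{\Q(\eta)}(s)$ (via $h^+=1$, which your unit-index argument gives), with the decomposition $\Phi=\{I_1,\dots,I_{12}\}$ justified exactly as in the paper by \cite[Lemme 2.2]{colmez} applied to the totally positive fundamental units $\varepsilon_1^2,\varepsilon_2^2$, the auxiliary vector $\alpha_4$ ensuring smoothness, and $\Phi^{(3)}=\{I_1,\dots,I_4\}$. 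Your additional checks (the determinant normalization $\det(w^{(1)},w^{(2)},w^{(3)})=7>0$, $\det I_i=1$ after reordering, and the totally positive $F'$-rationality of $C_i=\tp I_i T_{w,+}$ via Lemma \ref{lem basic}) are precisely the bookkeeping the paper's argument relies on.
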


In the cases where $k=2, 3$, by computing the coefficients $c_{I_i, \bm k -\mathbf{1}}$ explicitly, we find the following:
\begin{align}
\zeta_{\Q(\eta)}(2)
=
\frac{1}{7^3}
\sum_{i=1}^4
\sum_{\substack{\bm k \in \Z_{\geq 1}^{3}\\ |\bm k|=6}}
c'_{i,\bm k - \mathbf{1}}
\zeta_{C_i}(\bm k),\\
\zeta_{\Q(\eta)}(3)
=
\frac{1}{7^5}
\sum_{i=1}^4
\sum_{\substack{\bm k \in \Z_{\geq 1}^{3}\\ |\bm k|=9}}
c'_{i,\bm k - \mathbf{1}}
\zeta_{C_i}(\bm k),
\end{align}
where the coefficients $c'_{i,\bm k - \mathbf{1}}$ are given in Table \ref{table zeta2} and Table \ref{table zeta3}.

\begin{table}[h]
%\caption{$c'_{i, \bm k}$ for $k=2$}\label{table zeta2}
\begin{tabular}{c| cccc l |c| cccc}
%$\bm k$ & $c'_{1, \bm k}$ & $c'_{2, \bm k}$ & $c'_{3, \bm k}$ & $c'_{4, \bm k}$ &  & $\bm k$ & $c'_{1, \bm k}$ & $c'_{2, \bm k}$ & $c'_{3, \bm k}$ & $c'_{4, \bm k}$ \\ \hline
$\bm k$ & \quad$c'_{1, \bm k}$\quad & \quad$c'_{2, \bm k}$\quad & \quad$c'_{3, \bm k}$\quad & \quad$c'_{4, \bm k}$\quad &  & $\bm k$ & \quad$c'_{1, \bm k}$\quad & \quad$c'_{2, \bm k}$\quad & \quad$c'_{3, \bm k}$\quad & \quad$c'_{4, \bm k}$\quad \\ \hline
(4,1,1) & 6      & 6      & 6      & 42     &  & (2,1,3) & 12     & 28     & 10     & 14     \\
(3,2,1) & 12     & 10     & 14     & 28     &  & (1,4,1) & 6      & 6      & 42     & 6      \\
(3,1,2) & 10     & 14     & 12     & 28     &  & (1,3,2) & 12     & 14     & 28     & 10     \\
(2,3,1) & 10     & 12     & 28     & 14     &  & (1,2,3) & 10     & 28     & 14     & 12     \\
(2,2,2) & 13     & 21     & 21     & 21     &  & (1,1,4) & 6      & 42     & 6      & 6     
\end{tabular}
\caption{$c'_{i, \bm k}$ for $k=2$}\label{table zeta2}
\end{table}
\begin{table}[h]
%\caption{$c'_{i, \bm k}$ for $k=3$}\label{table zeta3}
\begin{tabular}{c| cccc l |c| cccc}
%$\bm k$ & Coeff1 & Coeff2 & Coeff3 & Coeff4 &  & $\bm k$ & Coeff1 & Coeff2 & Coeff3 & Coeff4 \\ \hline
$\bm k$ & \quad$c'_{1, \bm k}$\quad & \quad$c'_{2, \bm k}$\quad & \quad$c'_{3, \bm k}$\quad & \quad$c'_{4, \bm k}$\quad &  & $\bm k$ & \quad$c'_{1, \bm k}$\quad & \quad$c'_{2, \bm k}$\quad & \quad$c'_{3, \bm k}$\quad & \quad$c'_{4, \bm k}$\quad \\ \hline
(7,1,1) & 90     & 90     & 90     & 4410   &  & (3,1,5) & 276    & 1764   & 222    & 462    \\
(6,2,1) & 180    & 150    & 210    & 2940   &  & (2,6,1) & 150    & 180    & 2940   & 210    \\
(6,1,2) & 150    & 210    & 180    & 2940   &  & (2,5,2) & 258    & 378    & 2058   & 336    \\
(5,3,1) & 276    & 222    & 462    & 1764   &  & (2,4,3) & 327    & 735    & 1281   & 462    \\
(5,2,2) & 258    & 336    & 378    & 2058   &  & (2,3,4) & 318    & 1302   & 693    & 504    \\
(5,1,3) & 222    & 462    & 276    & 1764   &  & (2,2,5) & 258    & 2058   & 336    & 378    \\
(4,4,1) & 279    & 279    & 945    & 945    &  & (2,1,6) & 180    & 2940   & 150    & 210    \\
(4,3,2) & 327    & 462    & 735    & 1281   &  & (1,7,1) & 90     & 90     & 4410   & 90     \\
(4,2,3) & 318    & 693    & 504    & 1302   &  & (1,6,2) & 180    & 210    & 2940   & 150    \\
(4,1,4) & 279    & 945    & 279    & 945    &  & (1,5,3) & 276    & 462    & 1764   & 222    \\
(3,5,1) & 222    & 276    & 1764   & 462    &  & (1,4,4) & 279    & 945    & 945    & 279    \\
(3,4,2) & 318    & 504    & 1302   & 693    &  & (1,3,5) & 222    & 1764   & 462    & 276    \\
(3,3,3) & 349    & 847    & 847    & 847    &  & (1,2,6) & 150    & 2940   & 210    & 180    \\
(3,2,4) & 327    & 1281   & 462    & 735    &  & (1,1,7) & 90     & 4410   & 90     & 90    
\end{tabular}
\caption{$c'_{i, \bm k}$ for $k=3$}\label{table zeta3}
\end{table}

%% \section*{Acknowledgments}
%% I would like to express my gratitude to Kenichi Bannai for the constant encouragement and valuable comments during the study. 
%% This work was supported by JSPS KAKENHI Grant Number JP20J01008. 


\begin{thebibliography}{99}
\bibitem{amrt}
A. Ash\ et al., {\it Smooth compactifications of locally symmetric varieties}, second edition, Cambridge Mathematical Library, Cambridge University Press, Cambridge, 2010. MR2590897



\bibitem{bekki}
H.~Bekki, Shintani-Barnes cocycles and values of the zeta functions of
  algebraic number fields, to appear in Algebra \& Number Theory, arXiv:2104.09030. 
%
%
%


\bibitem{colmez}
P. Colmez, R\'{e}sidu en $s=1$ des fonctions z\^{e}ta $p$-adiques, Invent. Math. {\bf 91} (1988), no.~2, 371--389. MR0922806

  
%
\bibitem{cls}
D. A. Cox, J. B. Little\ and\ H. K. Schenck, {\it Toric varieties}, Graduate Studies in Mathematics, 124, American Mathematical Society, Providence, RI, 2011. MR2810322

%
\bibitem{gpz}
L. Guo, S. Paycha\ and\ B. Zhang, Conical zeta values and their double subdivision relations, Adv. Math. {\bf 252} (2014), 343--381. MR3144233
%
%
%
%
%
\bibitem{hurwitz}
A.~Hurwitz.
\newblock \"{U}ber die {A}nzahl der {K}lassen positiver tern\"{a}rer
  quadratischer {F}ormen von gegebener {D}eterminante.
\newblock {\em Math. Ann.}, 88(1-2):26--52, 1922.
%
%
%
%

\bibitem{ishida}
M.-N. Ishida, The duality of cusp singularities, Math. Ann. {\bf 294} (1992), no.~1, 81--97. MR1180451
%
%
%
%
%
\bibitem{shintani}
T.~Shintani.
\newblock On evaluation of zeta functions of totally real algebraic number
  fields at non-positive integers.
\newblock {\em J. Fac. Sci. Univ. Tokyo Sect. IA Math.}, 23(2):393--417, 1976.
%
%
%
%

\bibitem{terasoma}
T.~Terasoma,  
Rational convex cones and cyclotomic multiple zeta values, 
arXiv:math/0410306. 
%
\end{thebibliography}
\end{document}